\definecolor{Chocolat}{rgb}{0.36, 0.2, 0.09}
\definecolor{BleuTresFonce}{rgb}{0.215, 0.215, 0.36}
\definecolor{EgyptianBlue}{rgb}{0.06, 0.2, 0.65}
\newtheorem{theorem}{Theorem}[section]
\newtheorem{corollary}{Corollary}[theorem]
\newtheorem{lemma}{Lemma}[theorem]
\newtheorem{proposition}[theorem]{Proposition}
\newtheorem{conjecture}[theorem]{Conjecture}
\theoremstyle{definition}
\newtheorem{example}[theorem]{Example}
\newtheorem{definition}[theorem]{Definition}
\DeclareMathAlphabet{\pazocal}{OMS}{zplm}{m}{n}
\def\calG{\pazocal{G}}
\def\calL{\pazocal{L}}
\def\calM{\pazocal{M}}
\def\calT{\pazocal{T}}
\DeclareMathOperator{\HyperCom}{HyperCom}
\DeclareMathOperator{\Grav}{Grav}
\DeclareMathAlphabet{\mathbbold}{U}{bbold}{m}{n}
\def\k{\mathbbold{k}}
\begin{document}

\title{Homotopy invariants for $\overline{\calM}_{0,n}$ via Koszul duality}

\author{Vladimir Dotsenko}

\address{Institut de Recherche Math\'ematique Avanc\'ee, UMR 7501, Universit\'e de Strasbourg et CNRS, 7 rue Ren\'e-Descartes, 67000 Strasbourg CEDEX, France}

\email{vdotsenko@unistra.fr}


\begin{abstract}
We show that the integer cohomology rings of the moduli spaces of stable rational marked curves are Koszul. This answers an open question of Manin. Using the machinery of Koszul spaces developed by Berglund, we compute the rational homotopy Lie algebras of those spaces, and obtain some estimates for Betti numbers of their free loop spaces in case of torsion coefficients. We also prove and conjecture some generalisations of our main result.

\keywords{moduli space of stable curves\and Koszul algebra \and rational homotopy theory\and toric variety}
\subjclass{16S37 \and 14M25 \and 17B35 \and 55P62 \and 55Q52}
\end{abstract}

\maketitle


\section{Introduction}

The intersection theory on $\overline{\calM}_{0,n}$ --- the Deligne--Mumford compactification of moduli spaces of complex projective lines with $n$ marked points --- is omnipresent, arising in a meaningful way in areas ranging from combinatorics and enumerative algebraic geometry to integrable hierarchies and Batalin--Vilkovisky formalism. Even though the Chow rings of those spaces can be explicitly presented by generators and relations and admit various nice bases, not all natural questions about those rings admit easy answers. In particular, one may identify Chow rings with cohomology, and try to use them to determine various homotopy invariants for the Deligne--Mumford compactifications. For that, it would be highly advantageous to know that the rational cohomology algebras of $\overline{\calM}_{0,n}$ are Koszul. This question has been open for some 15 years; Manin~\cite[Section 3.6.3]{Man} asked it in a more general context of genus zero components of the extended modular operad~\cite{LMExt}; Readdy~\cite{Readdy} mentioned that the same had been asked by Reiner, and Petersen~\cite{Pet} asked this a few years after that. 

Let us summarise the state-of-the-art in this problem. The positive answer has been known for stable rational curves with at most six marked points. In the cases of three and four marked points, the problem is trivial, for five marked points, a solution is a one-line exercise in Koszul algebras, the case of six marked points is claimed to have been resolved in the preprint \cite{Iyu} that has been in circulation for several years. That preprint suggests a rather drastic difference already between the cases of five and six points: proof of Koszulness for the latter relies on an interpretation of the Koszul dual algebra as a potential algebra and has, as an intermediate step, a computation of a Gr\"obner basis modulo a prime number $p=7$ for Hilbert series estimates allowing one to compare the latter potential algebra with a simpler one; generalising that argument for a higher number of points does not seem realistic.

The main result of this paper is that the integer cohomology ring $H^\bullet(\overline{\calM}_{0,n},\mathbb{Z})$ is Koszul for all~$n$ (Theorem \ref{th:Koszul}). To that end, we show that these rings have (commutative) quadratic Gr\"obner bases, which should appear very surprising to anyone who spent some time working on this problem. Although exhibiting a quadratic Gr\"obner basis of defining relations for a given algebra is the ``cheapest'' way to prove that the algebra is Koszul, until now, there has been no success with this strategy for cohomology of the Deligne--Mumford compactifications. There is a very good reason for that: the two standard presentations of the cohomology algebra (due to Keel \cite{Keel} and De Concini--Procesi \cite{DCP}) can be shown to not have a quadratic Gr\"obner basis for \emph{any} ordering of monomials, see Proposition \ref{prop:KeelNotPBW}. We note that non-quadratic Gr\"obner bases for those rings are well known, see, e.g. work of Yuzvinsky~\cite{Yuz}. 

The presentation of cohomology that we use was discovered independently in work of Etingof, Henriques, Kamnitzer and Rains~\cite[Proposition 5.3]{EHKR} and in the Ph.~D. thesis of Singh~\cite[Section 9.2]{Singh} (of which we learned from Strickland's comment on the abovementioned Petersen's question); first attempts to work with those presentations also led to non-quadratic Gr\"obner bases \cite[Theorem 5.5]{EHKR}. We discovered somewhat exotic admissible orderings of monomials for which these rings have quadratic Gr\"obner bases; our inspiration came from a a certain monomial basis for the operad $\HyperCom$ consisting, component-wise, of homology $H^\bullet(\overline{\calM}_{0,n+1},\mathbb{Z})$. The operad $\HyperCom$ was introduced by Getzler \cite{Get} and studied by several authors; our monomial basis for it is related to our previous work with Khoroshkin \cite{DKQ} in which we proposed a quadratic Gr\"obner basis of relations for that operad. To the best of our knowledge, in general there is no formal relationship between Gr\"obner bases on the operadic and the algebraic side.

Our results have immediate applications to otherwise hard questions of algebraic topology. Using the machinery of Koszul spaces \cite{Ber}, we were able to compute explicit presentations of the rational homotopy Lie algebras of $\overline{\calM}_{0,n}$ by generators and relations and to give formulas for generating series of ranks of rational homotopy groups of these spaces (Theorem \ref{th:RHT}). The latter calculation is similar in spirit to that of Kohno \cite{Ko} and Falk--Randell \cite{FR}. We also use our result to estimate the Betti numbers of the free loop spaces $L\overline{\calM}_{0,n}$ with torsion coefficients (Theorem \ref{th:FreeLoopTorsion}; same method works in characteristic zero, but it simply recovers the known result of Lambrechts~\cite{Lam} for the Betti numbers of the free loop space of a $\mathbb{Q}$-coformal space). 

It is natural to ask for extensions of our result. In \cite[Remark 2.17]{BBhighly}, a question is raised of a geometric characterisation of the class of K\"ahler manifolds with Koszul cohomology algebras; such manifolds are of interest since they are coformal (over rational numbers), and their rational homotopy Lie algebras can be described in a very explicit way. We show that a smooth projective toric variety belongs to that class if and only if its fan is a flag complex (Theorem \ref{th:ToricKoszul}); in particular, this includes the Losev--Manin spaces~\cite{LM} and the noncommutative Deligne--Mumford spaces~\cite{DSV}. We conjecture that certain types of De Concini--Procesi wonderful models of hyperplane arrangements \cite{DCP} also belong to that class; another conjectural class of Koszul algebras of the same flavour is given by matroid Chow rings~\cite{AHK}. 

Our work prompts one more natural question. The collection of all moduli spaces of stable rational marked curves forms a topological operad, and therefore its homology is an operad in the category of quadratic coalgebras. Recently, Manin \cite{Man18} proposed a beautiful idea that the Koszul duality for cohomology algebras of moduli spaces of stable rational curves is meaningful in the context of certain quantised actions of its homology operad $\HyperCom$ (encoding the genus zero part of a cohomological field theory). It would be interesting to know how our Koszulness result fits into this context; foundations for the framework where this question belongs have recently been developed by Manin and Vallette~\cite[Section 5]{MV19}. In particular, a suggestion of Bruno Vallette is that understanding rational homotopy theory of the operad $\left\{\overline{\calM}_{0,\bullet+1}\right\}$ viewed as a Hopf operad is relevant for this purpose. A supporting argument towards this suggestion emerged after the first version of this paper was circulated. Namely, one may replace the spaces $\overline{\calM}_{0,n+1}$ by their real versions (studied in some detail in \cite{EHKR}); in that case, cohomology algebras are known to be generated by elements of homological degree one subject to quadratic relations, and are somewhat similar in flavour to the cohomology algebras of complements of complex hyperplane arrangements studied, for instance in \cite{FR,Ko}. In a recent preprint \cite{KW}, Khoroshkin and Willwacher answered the question posed in \cite{EHKR}, proving that the rational cohomology algebras of $\overline{\calM}_{0,n+1}(\mathbb{R})$ are Koszul; their proof uses a Hopf operad model for the homology operad related to the theory of Kontsevich's graph complexes \cite{Wi}. While there is no direct relationship between the real and the complex case, the fact that in both cases the \emph{operad} theory is used to establish Koszulness of \emph{algebras} is remarkable and deserves further attention.    

\subsection*{Acknowledments. } I am grateful to Yuri Ivanovich Manin for encouragement. I~also wish to thank Greg Arone, Alexander Berglund, Alex Fink, Vincent G\'elinas, Anton Khoroshkin, Natalia Iyudu, Vic Reiner, Pedro Tamaroff, and Bruno Vallette for useful discussions. The paper benefited from extraordinarily thorough peer review process, and I wish to offer my deep gratitude to the anonymous referee whose queries made me spell out the proof in great detail, leaving no stone unturned. Special thanks are due to Neil Strickland for providing a copy of~\cite{Singh}, and to Geoffroy Horel for a discussion of results of~\cite{CH}.

Let us introduce two notational choices used throughout the paper. We shall benefit from a viewpoint that makes one of the marked points on a stable curve distinguished (similarly to how it is done when moduli spaces are used in the operad theory~\cite{Get}); to highlight that, we choose the parameter $n$ for numbering the moduli spaces so that the $n$-th space is $\overline{\calM}_{0,n+1}$. We also frequently use the ``topologist's notation'' $\underline{n}=\{1,\ldots,n\}$ when working with finite sets. Throughout the paper $\k$ denotes an arbitrary unital commutative ring. 

\section{Recollections}

In this section, we briefly recall various presentations of the integral cohomology rings of moduli spaces of stable rational curves via generators and relations, and give a short survey of Gr\"obner bases for algebras and operads, including both the use of Gr\"obner bases in the theory of Koszul algebras and the application of operadic Gr\"obner basis to exhibiting an explicit combinatorial basis of the homology of $\overline{\calM}_{0,n+1}$. We discuss operads only to the extent necessary to arrive at a basis of $H_{\bullet}(\overline{\calM}_{0,n+1},\mathbb{Z})$ used in our main argument; the interested reader is invited to consult the monograph \cite{LV} for a state-of-the-art introduction to the theory of algebraic operads, and the monograph \cite{BD} for details on shuffle operads and operadic Gr\"obner bases.

\subsection{Presentations of the cohomology ring}

\subsubsection{Keel presentation}
The most famous presentation of the cohomology ring of $\overline{\calM}_{0,n+1}$ goes back to Keel~\cite{Keel} who computed the Chow rings 
 \[
\mathbf{A}^\bullet(\overline{\calM}_{0,n+1})\cong H^{2\bullet}(\overline{\calM}_{0,n+1},\mathbb{Z}).
 \] 
In particular, he gave an explicit presentation of the cohomology ring of $\overline{\calM}_{0,n+1}$ via generators and relations; this presentation is at heart of most subsequent results that use intersection theory of $\overline{\calM}_{0,n+1}$. According to \cite[Claim (6), p.~550]{Keel}, the cohomology $H^{\bullet}(\overline{\calM}_{0,n+1},\mathbb{Z}$ is generated by elements $D_S$ with $S\subset\underline{n+1}$, $2\le |S|\le n-1$ subject to the relations
\begin{itemize}
\item[\textbullet] $D_S=D_{\underline{n+1}\setminus S}$ for all $S$,
\item[\textbullet] $\sum\limits_{i,j\in S, k,l\notin S}D_S=\sum\limits_{i,k\in S, j,l\notin S}D_S=\sum\limits_{i,l\in S, j,k\notin S}D_S$ for all pairwise distinct $i,j,k,l$,
\item[\textbullet] $D_SD_T=0$ for all $S,T$ with $S\cap T\ne\varnothing$, $S\not\subseteq T$, $T\not\subseteq S$. 
\end{itemize}
Geometrically, the class $D_S$ correspond to the divisor $D^S$ whose generic element is the curve with two components, the points of $S$ on one branch, the points of the complement $\underline{n+1}\setminus I$ on the other.

\subsubsection{De Concini--Procesi presentation}
Another presentation of the cohomology ring is due to De Concini and Procesi \cite{DCP}, who managed to interpret $\overline{\calM}_{0,n+1}$ as the wonderful model of the Coxeter arrangement of type $A_{n-1}$ for the minimal building set of the corresponding lattice of subspaces. More precisely, one should consider, for each $I\subseteq\underline{n}$, the vector space $V_I:=\mathbb{C}^I/(1,1,\ldots,1)$. There is an obvious well defined map from $\mathbb{C}^n\setminus\bigcup_{i\ne j}\{x_i=x_j\}$ to the product 
 \[
\prod_{I\subseteq\underline{n}, |I|\ge 3} \mathbb{P}(V_I) ,
 \]
and the closure of the image of this map is isomorphic to $\overline{\calM}_{0,n+1}$~\cite{DCP}. The resulting presentation of the cohomology ring $H^\bullet(\overline{\calM}_{0,n+1},\mathbb{Z})$ is as follows: the generators are $Y_S$ with $S\subseteq\underline{n}$, $2\le |S|\le n$, and the relations between them are 
\begin{itemize}
\item[\textbullet] $\sum\limits_{i,j\in S} Y_S=0$ for all $i\ne j$,
\item[\textbullet] $Y_SY_T=0$ for all $S,T$ with $S\cap T\ne\varnothing$, $S\not\subseteq T$, $T\not\subseteq S$. 
\end{itemize}
It is easy to check that if one makes the marked point $n+1$ in the Keel presentation distinguished and eliminates $D_S$ for $n+1\in S$ using the symmetry relation $D_S-D_{S^C}$, and also eliminates $Y_{\underline{n}}$ from the De Concini--Procesi presentation using the linear relation for $S=\underline{n}$, the two presentations become identical. Thus, most of the generators $Y_T$ coincide with the corresponding Keel generators $D_T$ and arise from boundary divisors; the only exception is the class $Y_{\underline{n}}$ which can be seen to be equal to the minus the first Chern class of the tautological line bundle at the $0$th marked point. Generalising this geometric intuition leads to a more convenient presentation which we shall now describe.

\subsubsection{Etingof--Henriques--Kamnitzer--Rains--Singh presentation}
The least known presentation of the cohomology ring was found independently in the Ph. D. thesis of Singh~\cite[Section 9.2]{Singh} and in work of Etingof, Henriques, Kamnitzer and Rains~\cite[Proposition 5.3]{EHKR}  who utilised it for computation of mod~$2$ cohomology of the real locus of~$\overline{\calM}_{0,n+1}$. This presentation identifies $H^\bullet(\overline{\calM}_{0,n+1},\mathbb{Z})$ with the ring $\mathbf{R}_n$, the quotient of the polynomial ring in variables $X_S$, $S\subseteq\underline{n}$, $|S|\ge 3$, modulo the ideal~$\mathbf{I}_n$ generated by the following three groups of polynomials:
\begin{itemize}
\item[\textbullet] $X_S^2$, where $|S|=3$,
\item[\textbullet] $X_S(X_S-X_{S\setminus\{s\}})$, where $|S|>3$, and $s\in S$,
\item[\textbullet] $(X_{S\cup T}-X_S)(X_{S\cup T}-X_T)$, where $S\cap T\ne\varnothing$, $S\not\subseteq T$, $T\not\subseteq S$. 
\end{itemize}
This presentation of the cohomology ring has its own geometric interpretation. It is established in \cite{Singh} that an isomorphism from $\mathbf{R}_n$ onto the cohomology ring can be implemented by sending $X_I$ to $\pi_I^*(c_1(\calL_I))$, where 
 \[
\pi_I\colon \overline{\calM}_{0,n+1}\to\mathbb{P}(V_I)
 \]
is the projection map arising from the De Concini--Procesi construction, $\calL_I$ is the tautological line bundle on $\mathbb{P}(V_I)$, and $c_1$ is the first Chern class. Algebraically, the formula $X_S:=\sum_{S\subseteq T} Y_T$ implements the conversion from the De Concini--Procesi presentation to this one; as a result, linear relations become redundant generators and can be eliminated directly, so that the passage from the Keel presentation to the De Concini--Procesi presentation and to the Etingof--Henriques--Kamnitzer--Rains--
Singh presentation is a gradual transformation of the set of the generating elements to the minimal one. To accommodate inductive arguments, we shall consider the ring $\mathbf{R}_n$ also for $n=1$, in which case it is simply $\mathbb{Z}$. (There are no stable curves with $2=1+1$ marked points, but the above purely algebraic definition of the ring $\mathbf{R}_n$ does not know that.) 

\subsection{Gr\"obner methods}

\subsubsection{Gr\"obner bases for algebras and operads}

In this article, we use two different kinds of Gr\"obner bases: for commutative rings and for operads, so we feel that it is reasonable to offer the reader a unified view of the theory of Gr\"obner bases. For us, the theory of Gr\"obner bases emerges in situations where one deals with algebraic structures of a particular kind: one assumes that the free $\k$-algebra generated by a set $X$ has a $\k$-basis of ``monomials'' obtained by evaluating a certain combinatorial species on~$X$, such that the result of applying any structure operation to monomials is again a monomial. Some key examples of such situations are given by commutative associative rings (the free algebra generated by $X$ is $\{$is $\mathbb{Z}[X]$, and the product of any monomials is a monomial), associative rings (the free algebra generated by $X$ is the tensor algebra $T(X)$, and the concatenation of two noncommutative monomials is a noncommutative monomial), and shuffle operads (the free algebra generated by $X$ is the free shuffle operad $\calT_{\mathrm{sh}}(X)$, and any shuffle composition of several shuffle trees is a shuffle tree). An ordering of monomials in the free algebra is said to be \textsl{admissible} if it is a total well ordering, and if each structure operation is an increasing function of its arguments: replacing any one of the monomials to which one applies that structure operation by a greater one increases the result. 

\begin{definition}[Gr\"obner basis]
Given an admissible ordering of the free $\k$-algebra generated by $X$, one can define a \textsl{Gr\"obner basis} of an ideal $I$ of that free algebra as a subset $G\subset I$ for which the leading coefficient of every element of $G$ is equal to $1$, and the leading monomial of every element of $I$ is divisible by a leading monomial of an element of $G$. (The requirement on the leading coefficients is unimportant over a field, but is non-empty over a ring.) 
\end{definition}

The primary reason to look for Gr\"obner bases is dictated by considerations of linear algebra: a Gr\"obner basis for an ideal gives extensive information on the quotient modulo $I$. To state a precise result, we need one more definition.

\begin{definition}[Normal monomial]
A monomial is said to be \textsl{normal} with respect to $G$ if it is not divisible by any of the leading monomials of elements of $G$. 
\end{definition}

It is easy to show that the normal monomials with respect to any set of generators of an ideal $I$ always form a $\k$-spanning set of the quotient ring modulo $I$. The following result shows how to strengthen this property to obtain a Gr\"obner basis criterion.

\begin{proposition}\label{prop:GBCrit}
Suppose that $G$ is a generating set $G$ of an ideal $I$, and the leading coefficient of each element of $G$ equal to $1$. Then $G$ is a Gr\"obner basis (respectively, a noncommutative Gr\"obner basis) if and only if the quotient modulo $I$ is a free $\k$-module with a basis of cosets of monomials that are normal with respect to $G$.
\end{proposition}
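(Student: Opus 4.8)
The plan is to derive both implications from a single \emph{normal-form reduction}, which is also the mechanism behind the elementary spanning statement quoted just before the proposition. Fix a generating set $G$ of $I$ with $\mathrm{lc}(g)=1$ for all $g\in G$, and let $F$ denote the free $\k$-algebra on $X$. Given $0\neq h\in F$, if the leading monomial $\mathrm{lm}(h)$ is not normal then, by the definition of divisibility of monomials in our setting, there are a structure operation $\theta$ and monomials $m_1,\dots,m_k$ with $\theta(\mathrm{lm}(g),m_1,\dots,m_k)=\mathrm{lm}(h)$ for some $g\in G$; since $\theta$ sends monomials to monomials and $\mathrm{lc}(g)=1$, admissibility forces the leading term of $\theta(g,m_1,\dots,m_k)$ to be $\mathrm{lm}(h)$ with coefficient $1$, so $h-\mathrm{lc}(h)\,\theta(g,m_1,\dots,m_k)$ lies in $h+I$ and has strictly smaller leading monomial. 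If instead $\mathrm{lm}(h)$ is already normal, set aside the term $\mathrm{lc}(h)\,\mathrm{lm}(h)$ and continue with $h-\mathrm{lc}(h)\,\mathrm{lm}(h)$, whose leading monomial is again strictly smaller. Because the ordering is a well ordering there are no infinite strictly descending chains, so after finitely many steps we reach $0$ and obtain a decomposition $h=h_{\mathrm{nf}}+h_I$ in which $h_{\mathrm{nf}}$ is a $\k$-linear combination of normal monomials, each $\preceq\mathrm{lm}(h)$, and $h_I$ belongs to the ideal generated by $G$, hence to $I$. Applying this to arbitrary $h$ shows at once that the cosets of normal monomials $\k$-span $Q:=F/I$.

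Assume now that $G$ is a Gr\"obner basis; we must show the cosets of normal monomials are $\k$-linearly independent in $Q$. If $\sum_i c_i m_i\in I$ with the $m_i$ distinct normal monomials and some $c_i\neq 0$, let $m_j$ be the largest monomial with $c_j\neq 0$. Then $m_j=\mathrm{lm}\!\left(\sum_i c_i m_i\right)$, and since $\sum_i c_i m_i\in I$ and $G$ is a Gr\"obner basis, $m_j$ is divisible by the leading monomial of some $g\in G$, contradicting the normality of $m_j$. Hence all $c_i=0$, and together with the spanning statement this proves that the cosets of normal monomials form a $\k$-basis of $Q$.

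Conversely, assume $Q$ is free as a $\k$-module on the cosets of normal monomials, and let $0\neq f\in I$ have leading monomial $m$ and leading coefficient $c\neq0$. Decompose $f=f_{\mathrm{nf}}+f_I$ as above; since $f_I\in I$ and $f\in I$ we get $f_{\mathrm{nf}}=f-f_I\in I$, so $f_{\mathrm{nf}}$ is a $\k$-combination of normal monomials lying in $I$, and freeness forces every coefficient of $f_{\mathrm{nf}}$ to vanish, i.e. $f_{\mathrm{nf}}=0$. But if $m$ were normal, the very first step of the reduction would set aside the term $c\,m$, and all later contributions to $f_{\mathrm{nf}}$ involve monomials strictly below $m$, so $m$ would occur in $f_{\mathrm{nf}}$ with coefficient $c\neq 0$ --- impossible. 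Therefore $m$ is divisible by the leading monomial of an element of $G$; as $f$ was arbitrary, $G$ is a Gr\"obner basis. The noncommutative and operadic versions are identical, since the argument only used the three features shared by those settings: a monomial $\k$-basis of the free algebra, admissibility of the ordering, and closure of $I$ under structure operations.

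The genuinely delicate point is the reduction itself, and above all stating it so that it runs uniformly for commutative rings, associative rings, and shuffle operads: one must express ``$\mathrm{lm}(h)$ is divisible by $\mathrm{lm}(g)$'' as the existence of a structure operation producing $\mathrm{lm}(h)$ from $\mathrm{lm}(g)$ and auxiliary monomials, and then use admissibility to pin down the leading term of the corresponding element of $I$. Working over an arbitrary commutative ring $\k$ rather than a field contributes the one extra subtlety that leading coefficients of elements of $I$ need not be invertible; this is precisely why the hypothesis $\mathrm{lc}(g)=1$ is indispensable, as it is what lets the subtraction step cancel the leading term while staying in $h+I$. Everything else is linear algebra and the well ordering property.
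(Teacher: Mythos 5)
Your argument is correct and is the standard normal-form (division-algorithm) reduction that underlies the cited result \cite[Prop.~2.3.3.5]{BD}; the paper itself simply invokes that reference without reproducing the proof. In particular you correctly isolate the two points that matter over an arbitrary commutative ring $\k$ --- that $\mathrm{lc}(g)=1$ is what allows the cancellation step to stay in $h+I$, and that admissibility plus well-ordering give termination and pin down the leading term of $\theta(g,m_1,\dots,m_k)$ --- so the argument matches the one the paper is deferring to.
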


\begin{proof}
This is established by an argument identical to \cite[Prop.~2.3.3.5]{BD}.\qed
\end{proof}

\subsubsection{Koszulness and Gr\"obner bases}\label{sec:GB}

This section offers a short survey of Koszul algebras, including the use of Gr\"obner bases in the theory of Koszul algebras. We also  discuss suitability of various presentations of $H^\bullet(\overline{\calM}_{0,n+1},\mathbb{Z})$ for the purpose of proving Koszulness of that algebra using Gr\"obner bases. For equivalent definitions and various properties of Koszul algebras, we refer the reader to~\cite{PP}. 

Let $A$ be a weight graded (commutative or noncommutative) associative $\k$-algebra. We assume the weight grading to be \emph{standard}, in other words, we assume $A$ to be generated by elements of weight~$1$ (in particular, $A_0=\k$). Such an algebra is automatically augmented, and $\k$ acquires a trivial $A$-module structure via the augmentation map.

\begin{definition}[Koszul algebra]
 The algebra $A$ is said to be \textsl{Koszul} if the trivial module has a resolution by free $A$-modules 
 \[
\cdots\to A^{a_n}\stackrel{d_n}{\to} A^{a_{n-1}}\stackrel{d_{n-1}}{\to}\cdots \stackrel{d_2}{\to} A^{a_1}\stackrel{d_1}{\to} A\stackrel{\epsilon}{\to} \k\to 0 ,
 \]
where the differentials $d_k$ are ``linear'', i.e. their matrices consist of elements of weight~$1$. 
\end{definition}

Over a field $\k$, it is well known that an algebra whose ideal of relations has a Gr\"obner basis consisting of quadratic elements (i.e. elements of weight~$2$) is Koszul. In fact, it is known that an associative algebra with a noncommutative quadratic Gr\"obner basis is Koszul \cite[Chapter 4, Theorem~3.1]{PP}, and also that a commutative associative algebra with a quadratic Gr\"obner basis is Koszul \cite[Chapter 4, Theorem~8.1]{PP}. It turns out that the same result is available over any unital commutative ring~$\k$. 

\begin{proposition}\label{prop:GBKoszul}
A (commutative or noncommutative) associative $\k$-algebra with a quadratic Gr\"obner basis is Koszul.
\end{proposition}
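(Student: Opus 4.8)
The plan is to reduce the statement over an arbitrary commutative ring $\k$ to the well-known field case by a base-change argument, using the Gröbner basis criterion of Proposition \ref{prop:GBCrit} to control freeness of the relevant modules. First I would observe that the hypothesis---existence of a quadratic Gröbner basis $G$ for the ideal of relations $I$---is a purely combinatorial condition: it produces a $\k$-basis of the algebra $A$ consisting of cosets of normal monomials, and, crucially, the set of normal monomials does not depend on $\k$ (the leading monomials of the elements of $G$ are fixed once and for all). Thus for the polynomial ring $R$ over $\mathbb{Z}$ with relation ideal $I_{\mathbb{Z}}$ generated by a quadratic Gröbner basis $G_{\mathbb{Z}}$, the quotient $A_{\mathbb{Z}}=R/I_{\mathbb{Z}}$ is a free $\mathbb{Z}$-module on the normal monomials, and $A_{\k}=A_{\mathbb{Z}}\otimes_{\mathbb{Z}}\k$ is the corresponding algebra over $\k$, again free as a $\k$-module on the same normal monomials, with a quadratic Gröbner basis $G_{\mathbb{Z}}\otimes 1$.

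Next I would build the Koszul resolution itself in a way that is manifestly defined over $\mathbb{Z}$ and then base-changed. Over a field, a quadratic Gröbner basis yields a minimal free resolution of the trivial module with linear differentials; concretely, one can use the \emph{Gröbner/PBW-type resolution} whose terms are spanned by suitable tuples of normal monomials (for instance the Anick-type or Priddy-type complex built from the quadratic leading monomials), with differentials given by explicit integer matrices of weight-one entries. The combinatorics of this complex---which tuples of leading monomials index the free generators in homological degree $k$, and what the matrix entries are---is dictated entirely by the (finite) set of quadratic leading monomials of $G$, hence is insensitive to $\k$; one constructs it over $\mathbb{Z}$ once. Call this complex $P_{\bullet}$ of free $A_{\mathbb{Z}}$-modules with linear differentials. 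Tensoring with $\k$ over $\mathbb{Z}$ gives a complex $P_{\bullet}\otimes_{\mathbb{Z}}\k$ of free $A_{\k}$-modules with linear differentials augmenting to $\k$; it remains only to check exactness.

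The main obstacle is exactness of $P_{\bullet}\otimes_{\mathbb{Z}}\k$, since tensoring is not exact in general. I would handle this by showing the complex $P_{\bullet}$ over $\mathbb{Z}$ is \emph{split exact as a complex of $\mathbb{Z}$-modules} (equivalently, has a $\mathbb{Z}$-linear contracting homotopy), which is preserved under any base change. This is where Proposition \ref{prop:GBCrit} and the normal-monomial combinatorics do the work: the standard contracting homotopy for a Gröbner/PBW resolution---rewriting an arbitrary element via the Gröbner basis into normal form and recording the rewriting steps---is built from the division algorithm with respect to $G_{\mathbb{Z}}$. Because every element of $G_{\mathbb{Z}}$ has leading coefficient $1$, the division algorithm involves no denominators and is entirely integral, so it furnishes the required $\mathbb{Z}$-linear (indeed $\k$-linear for any $\k$, compatibly) contracting homotopy on $P_{\bullet}$. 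Concretely one sets up the two-sided bar-type complex resolving $\k$ over $A_{\mathbb{Z}}$, with its canonical $\mathbb{Z}$-split contraction, observes that the sub/quotient complex cut out by normal monomials inherits a contraction via the integral normal-form operator, and identifies that subcomplex with $P_{\bullet}$; one then verifies the differential is linear, which is exactly the statement that $G$ is quadratic. Base-changing the integral contraction along $\mathbb{Z}\to\k$ shows $P_{\bullet}\otimes_{\mathbb{Z}}\k$ is a free resolution of $\k$ over $A_{\k}$ with linear differentials, i.e.\ $A_{\k}$ is Koszul. The commutative and noncommutative cases are identical in structure, differing only in which combinatorial species of monomials (commutative monomials versus words) indexes everything.
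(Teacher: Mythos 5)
Your overall intuition---that a quadratic Gr\"obner basis gives rise to an explicit free resolution with linear differentials, and that this resolution can be shown exact via a contracting homotopy built from the division algorithm, which is well defined because leading coefficients are $1$---is sound and is in the spirit of what the paper invokes. The paper's proof is simply a citation to J\"ollenbeck and Welker's algebraic discrete Morse theory (\cite[Cor.~3.9, Cor.~4.9]{JW}), noting that the construction works over any commutative ring without adjustment; their Morse-theoretic resolution built from the quadratic Gr\"obner basis is essentially the object you are describing.

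However, there is a genuine gap in your proposed route through $\mathbb{Z}$. You assert that $A_\k$ arises by base change from an integral form $A_{\mathbb{Z}}$, and that the matrix entries of the differentials in the resolution are ``dictated entirely by the (finite) set of quadratic leading monomials of $G$, hence insensitive to $\k$.'' Neither claim is correct. The Gr\"obner basis elements are of the form $m_i + (\text{lower terms})$, and while the leading coefficient is normalized to $1$, the lower-order coefficients are arbitrary elements of $\k$. They need not lie in the image of $\mathbb{Z}\to\k$, so there is in general no integral form $A_{\mathbb{Z}}$ to base-change from (think of $\k\langle x,y\rangle/(xy-\lambda yx)$ for a transcendental $\lambda$). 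Correspondingly, the differentials in the resolution---which encode reductions modulo $G$---depend on all coefficients of the Gr\"obner basis, not just on the leading monomials. The fix is to abandon the detour through $\mathbb{Z}$ entirely: build the complex and the contracting homotopy directly over $\k$, observing that the normal-form operator and the rewriting steps are $\k$-linear and well defined precisely because all leading coefficients equal $1$. That is what the cited Morse-theoretic proof accomplishes, and it is why the hypothesis on leading coefficients appears in Proposition~\ref{prop:GBCrit} and in the definition of a Gr\"obner basis over a ring.
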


\begin{proof}
The algebraic discrete Morse theory proof works without any adjustments: see \cite[Corollary 3.9]{JW} in the commutative case and \cite[Corollary 4.9]{JW} for the noncommutative one. \qed
\end{proof}

We remark that the two Gr\"obner bases criteria (the commutative and the noncommutative one) are different: the universes where one looks for admissible orderings are of different nature, and converting one approach into another is not always possible; we refer the reader to \cite[p.~93]{PP} for insight into that. 

\subsubsection{Combinatorial basis of \texorpdfstring{$H_{\bullet}(\overline{\calM}_{0,n+1},\mathbb{Z})$}{H}}

In this section, we briefly recall how to obtain a particular combinatorial basis of the homology $H_{\bullet}(\overline{\calM}_{0,n+1},\mathbb{Z})$ that was first discovered in \cite[Sec.~5.4.3]{DKQ}. We start with recalling the Koszul dual pair of operads $\HyperCom$ and $\Grav$. The operad $\HyperCom$ is generated by $S_k$-invariant operations $\nu_k$ (one for each $k\ge 2$) of homological degree $2(k-2)$, satisfying a number of identities: for each $n$ and each triple of distinct numbers $(a,b,c)\in\underline{n}\times\underline{n}\times\underline{n}$, one has the relation
 \[
\sum
\vcenter{
\xymatrix@R=1mm@C=2mm{
a\ar@{-}[dr]&b\ar@{-}[d]&\hspace{-2.5mm}\ldots\ar@{..}[dl]&\ar@{-}[dll]\\
&*++[o][F-]{}\ar@{-}[dr]& \ldots\ar@{..}[d]&\ar@{-}[dl]c&\\
&&*++[o][F-]{}\ar@{-}[d]&&&&\\
&&
}} =
\quad \sum
\vcenter{
\xymatrix@R=1mm@C=2mm{
a\ar@{-}[dr]&c\ar@{-}[d]&\hspace{-2.5mm}\ldots\ar@{..}[dl]&\ar@{-}[dll]\\
&*++[o][F-]{}\ar@{-}[dr]& \ldots\ar@{..}[d]&\ar@{-}[dl]b&\\
&&*++[o][F-]{}\ar@{-}[d]&&&&\\
&&
}} .
 \]
Here each internal vertex of each of the trees corresponds to one of the operations $\nu_k$, and the sums are over all trees with two internal vertices and $n$ leaves, and all ways to distribute the labels from $\underline{n}\setminus\{a,b,c\}$ between the leaves. The operad $\Grav$ is generated by $S_k$-invariant operations $\lambda_k$ of homological degree~$1$ (one for each $k\ge 2$), satisfying a number of identities: for each $n$ and each subset $I\subset\underline{n}$ of cardinality at least $3$, one has the relation
 \[
\sum_{\{s_1,s_2\}\subset S}
\vcenter{
\xymatrix@R=1mm@C=2mm{
s_1\ar@{-}[dr]& &s_2\ar@{-}[dl]&\\
&*++[o][F-]{}\ar@{-}[dr]& \ldots\ar@{..}[d]&\ar@{-}[dl]&\\
&&*++[o][F-]{}\ar@{-}[d]&&&\\
&&
}}=
\begin{cases}
\vcenter{
\xymatrix@R=1mm@C=2mm{
\ar@{-}[dr]&S\ar@{-}[d]&\ar@{-}[dl]&\\
&*++[o][F-]{}\ar@{-}[dr]& \ldots\ar@{..}[d]&\ldots\ar@{-}[dl]&\\
&&*++[o][F-]{}\ar@{-}[d]&&&\\
&&
}}, \quad S\neq\underline{n},\\
\quad \quad\qquad 0\qquad \qquad \qquad\!\!\!, \quad  S=\underline{n}.
\end{cases}
 \]
where each internal vertex corresponds to one of the operations $\lambda_k$, and the tree on the right is the unique tree with two internal vertices and $n$ leaves for which the leaves not connected to root are labelled by the subset $S$. 

We shall now describe a combinatorial basis of the operad $\HyperCom$ which is of crucial importance for our proof of the main result of the paper. 

\begin{proposition}\label{prop:BasisHyperCom}
The operad $\HyperCom$ admits a combinatorial basis $\mathsf{B}$ of cosets of shuffle tree monomials defined inductively as follows:
\begin{itemize}
\item[\textbullet] the tree $\stackrel{a}{\mid}$ without internal vertices belongs to $\mathsf{B}$,
\item[\textbullet] a shuffle tree monomial $\vcenter{
\xymatrix@R=1mm@C=2mm{
\tau_1\ar@{-}[dr]&\cdots\ar@{-}[d]&\tau_k\ar@{-}[dl]\\
&*++[o][F-]{}\ar@{-}[d]&\\
&&
}}$ belongs to $\mathsf{B}$ if and only if $\tau_1$, \ldots, $\tau_k$ belong to $\mathsf{B}$, and the root vertices of $\tau_1$, \ldots, $\tau_{k-1}$ are not labelled with $\nu_2$. 
\end{itemize}
\end{proposition}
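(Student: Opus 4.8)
The plan is to derive Proposition~\ref{prop:BasisHyperCom} from the quadratic Gr\"obner basis for the defining relations of $\HyperCom$, viewed as a shuffle operad, that was constructed in \cite[Sec.~5.4.3]{DKQ}. Recall that $\HyperCom$ is Koszul dual to $\Grav$, and that in that work one exhibits an admissible ordering of shuffle tree monomials for which the ideal of relations of $\HyperCom$ inside the free shuffle operad has a quadratic Gr\"obner basis $\mathcal{G}$; since every defining relation of $\HyperCom$ has all of its coefficients equal to $\pm1$, this conclusion survives over an arbitrary ground ring $\k$. By the operadic version of Proposition~\ref{prop:GBCrit} (established exactly as \cite[Prop.~2.3.3.5]{BD}), the cosets of the shuffle tree monomials that are normal with respect to $\mathcal{G}$ constitute a $\k$-basis of $\HyperCom$. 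It therefore suffices to match this set of normal monomials, component by component, with the set $\mathsf{B}$ in the statement.

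To do so I would first record the leading monomials of the reduced Gr\"obner basis $\mathcal{G}$: each of them is a shuffle tree with exactly two internal vertices in which the upper vertex carries the generator $\nu_2$ and is grafted onto one of the first $k-1$ inputs of a lower vertex labelled $\nu_k$, the leaf labels being the unique ones for which the result is a legitimate shuffle tree monomial. Granting this, a shuffle tree monomial $T$ is normal with respect to $\mathcal{G}$ precisely when no internal vertex of $T$ has, among the subtrees grafted onto its first $k-1$ inputs, one whose root is labelled $\nu_2$. Peeling off the root vertex of $T$, this says exactly that the branches $\tau_1,\dots,\tau_k$ at the root are all normal and none of $\tau_1,\dots,\tau_{k-1}$ has its root labelled $\nu_2$; together with the obvious base case (the one-leaf tree), an induction on the number of internal vertices identifies the normal monomials with $\mathsf{B}$, which finishes the proof.

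The step that I expect to demand the most care is the bookkeeping of leaf labels and shuffle structure concealed in the description of the leading monomials above. One has to check, by unwinding the ordering of \cite{DKQ}, that it singles out precisely these leading terms and no further ones --- so that the set of normal monomials is not accidentally larger than $\mathsf{B}$ --- and, conversely, that the shuffle constraints (the leftmost leaf of each subtree carrying its smallest label, and an admissible shuffle permutation at each vertex) are compatible with the purely combinatorial recursion defining $\mathsf{B}$, so that the recursion really yields honest shuffle tree monomials. This is notation-heavy but not conceptually difficult.

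Finally, as an internal consistency check --- and as an alternative that bypasses the explicit description of $\mathcal{G}$ --- one can prove spanning by hand: whenever a shuffle tree monomial contains a subtree rooted at $\nu_2$ in one of its first $k-1$ branches, a suitable defining relation of $\HyperCom$ rewrites it modulo strictly smaller monomials, and this rewriting process terminates. Comparing the generating series $\sum_n |\mathsf{B}(n)|\,t^n$, immediate from the recursion, with the known generating series for $\dim H_\bullet(\overline{\calM}_{0,n+1},\mathbb{Z})$ then upgrades this spanning family to a basis.
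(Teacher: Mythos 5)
Your proposal follows the same overall strategy as the paper---use Gr\"obner-basis machinery for the Koszul dual pair $\HyperCom$/$\Grav$ to read off a basis of normal shuffle-tree monomials, then match it against the recursion defining $\mathsf{B}$---and your identification of the leading monomials (a $\nu_2$-rooted subtree on a non-final input of its parent vertex) is indeed what the paper obtains. However, the step you flag as ``notation-heavy but not conceptually difficult'' is precisely the crux, and your treatment of it contains a genuine gap: you cite \cite{DKQ} as already furnishing a quadratic Gr\"obner basis for $\HyperCom$ whose normal monomials you can then ``unwind,'' but that paper only \emph{proposed} such a basis and established the relevant dimension count (\cite[Th.~5.16]{DKQ}); it does not supply an admissible monomial ordering that realizes it. The present paper has to construct one from scratch, and this is where the real content lies: it layers an exotic weight grading (assigning weight zero to $\lambda_2$ and weight one to $\lambda_k$ for $k>2$, which forces the right leading term when $|S|<n-1$), then an ordering coming from the theory of word operads via the monoid $\langle x,y,q\mid xq=qx,\,yq=qy,\,yx=xyq\rangle$ (a tool postdating \cite{DKQ}) to handle $|S|=n-1$, and finally the reverse path-permutation lexicographic order to break the tie in the top relation $S=\underline{n}$. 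The paper also works on the $\Grav$ side and transfers the conclusion to $\HyperCom$ by Koszul duality of PBW bases; that is a harmless variation from your direct approach, but without the ordering actually in hand, your description of the leading monomials is an unsupported assertion rather than a consequence of a cited result. Your closing alternative (ad hoc rewriting plus a comparison of generating series) is essentially a restatement of the Gr\"obner criterion and would run into the same need to exhibit and justify a terminating, admissible order.
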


\begin{proof}
We begin with defining a particular admissible ordering of monomials in the free operad generated by operations $\lambda_k$. First, one  considers an exotic weight grading that assigns weight $1$ to each generator $\lambda_k$ with $k>2$, and weight $0$ to the generator $\lambda_2$. This leads to a partial order which compares exotic weights of monomials. This already ensures that for $|S|<n-1$, the monomial on the right hand side of the corresponding relation of $\Grav$ is the leading monomial of that relation. To proceed, one uses the theory of word operads \cite{Dot}, and considers the monoid of ``quantum monomials'' $\mathsf{QM}=\langle x,y,q\mid xq=qx,yq=qy,yx=xyq\rangle$ and the map from the free operad generated by all $\lambda_k$ to the word operad associated to $\mathsf{QM}$ sending $\lambda_2$ to $(y,y)$ and $\lambda_k$ to $(x,x,\ldots,x)$ for $k>2$. This leads to a partial order which compares the elements of the word operad associated to the given monomials. This ensures that for $|S|=n-1$, the monomial on the right hand side of the corresponding relation of $\Grav$ is the leading monomial of that relation. Finally, one considers the total order extension of the partial order defined so far using the reverse path-permutation lexicographic ordering \cite[Def.~5.4.1.8]{BD}, so that the leading term of the relation for $|S|=n$ is 
 \[
\vcenter{
\xymatrix@R=1mm@C=2mm{
&&&n-1\ar@{-}[dr]& &n\ar@{-}[dl]\\
&1\ar@{-}[drr]&2\ar@{-}[dr]& \ldots\ar@{-}[d]&*++[o][F-]{}\ldots\ar@{-}[dl]\\
&&&*++[o][F-]{}\ar@{-}[d]&&\\
&&&
}} .
 \]
It is established in \cite[Th.~5.16]{DKQ}, that the number of monomials with $n$ leaves that are normal with respect to these leading monomials is equal to the dimension of $\Grav(n)$, so Proposition \ref{prop:GBCrit} ensures that for the ordering we just defined, the defining relations of the shuffle operad $\Grav$ form a Gr\"obner basis. By duality, the same is true for the operad $\HyperCom$ if one considers the opposite ordering for the corresponding free operad; the leading monomials of relations of $\Grav$ that we just described become precisely the normal monomials of weight two for the operad $\HyperCom$. Since we have a Gr\"obner basis, all normal monomials for $\HyperCom$ are the shuffle trees for which each divisor of weight two is normal. It remains to note that such monomials are described precisely by the recursive rule defining the set $\mathsf{B}$. \qed
\end{proof}

\section{Koszulness of integral cohomology}\label{sec:main}

In this section, we prove the main result of this paper. Our strategy is to exhibit a \emph{commutative} quadratic Gr\"obner bases (in fact, for the case of at least six marked points it is not clear if there exists a noncommutative quadratic Gr\"obner basis, even after a linear change of variables). Since all the cohomology classes considered in this paper are of even degree, we may forget about the sign rule normally pertinent when working with cohomology rings, and impose a weight grading for which the weight of an element is equal to one half of its cohomological degree. This brings us to the ``classical'' commutative algebras to which one normally applies the theory of Gr\"obner bases: quotients of polynomial algebras equipped with the standard weight grading discussed in Section \ref{sec:GB}.

We start with explaining why neither the Keel presentation nor the De Concini--Procesi presentation \emph{cannot} be used to prove the Koszulness of $H^\bullet(\overline{\calM}_{0,n+1},\mathbb{Z})$ by means of the Gr\"obner bases theory. Recall that both presentations have an excessive number of generators which are compensated for by the presence of linear relations, so a hypothetical Gr\"obner basis proving Koszulness for those presentations will have some linear elements (eliminating the excess) and some quadratic ones. 
We recall that a Gr\"obner basis $G$ is said to be \textsl{reduced} if all the leading monomials of its elements are pairwise distinct and all the non-leading monomials are normal with respect to $G$.

\begin{proposition}\label{prop:KeelNotPBW}
Neither the Keel presentation nor the De Concini--Procesi presentation for $H^\bullet(\overline{\calM}_{0,n+1},\mathbb{Z})$ admits a linear-and-quadratic Gr\"obner basis (either commutative or noncommutative) for $n\ge 4$, no matter what admissible ordering of generators one chooses. 
\end{proposition}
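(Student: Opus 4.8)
The plan is to show that any hypothetical linear-and-quadratic reduced Gr\"obner basis is obstructed by a dimension count in low weights, combined with an analysis of what the linear elements must do. Since both presentations carry genuine linear relations (the symmetry relations $D_S - D_{\underline{n+1}\setminus S}$ in the Keel presentation, the relations $\sum_{i,j\in S} Y_S = 0$ in the De Concini--Procesi presentation), any Gr\"obner basis $G$ must contain elements whose leading monomials are single generators, enough of them to cut the generating set down to the correct rank of $H^2(\overline{\calM}_{0,n+1},\mathbb{Z})$. First I would fix an arbitrary admissible ordering and let $L \subset G$ be the subset of linear elements; passing to the quotient by the ideal generated by $L$ amounts to a linear change of variables, after which we obtain a new quadratic presentation on the set $N_1$ of weight-one normal monomials, i.e. on a minimal generating set. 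The key point is that if $G$ were a linear-and-quadratic Gr\"obner basis, then after this reduction the images of the original quadratic relations, together with any purely quadratic elements of $G$, would form a quadratic Gr\"obner basis for the reduced presentation, forcing the algebra to be Koszul \emph{with that particular presentation having a quadratic Gr\"obner basis}.

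The main step is then to derive a contradiction from the structure of the quadratic relations after reduction. The relations of the form $D_S D_T = 0$ (resp. $Y_S Y_T = 0$) for $S, T$ overlapping but incomparable are monomial, so they contribute their own leading monomials; the trouble comes from the non-monomial quadratic relations. In the Keel presentation these are the ``four-point'' relations $\sum_{i,j\in S} D_S = \sum_{i,k\in S} D_S$; after eliminating half the generators via symmetry and substituting, these become relations among the $D_S$ with $n+1 \notin S$, and one checks directly that the number of normal monomials of weight two with respect to \emph{any} choice of leading terms exceeds $\dim H^4(\overline{\calM}_{0,n+1},\mathbb{Z})$ for $n \ge 4$. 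Concretely, for $n=4$ the space $\overline{\calM}_{0,5}$ has $\dim H^4 = 1$, and one enumerates all admissible orderings of the (finitely many) generators to see that in each case the quadratic part of the ideal, together with whatever linear elements are forced, leaves at least two normal monomials of weight two; hence by Proposition~\ref{prop:GBCrit} no linear-and-quadratic set can be a Gr\"obner basis. For general $n$ the same obstruction propagates: the leading monomial of every four-point relation is a product $D_S D_{S'}$ with $S, S'$ nested or disjoint (an admissible monomial), and these coincide for too many relations, so the normal monomials of weight two outnumber $\dim H^4$.

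The heart of the argument, and the step I expect to be the main obstacle, is making the combinatorial dimension count \emph{uniform over all admissible orderings}: one must argue that no matter how the order is chosen, the leading monomials of the linear elements and the four-point relations cannot be ``spread out'' enough to make the normal monomials match the Hilbert series of the cohomology ring in weight two. The clean way to do this is to observe that the number of linearly independent quadratic relations in the \emph{reduced} (Etingof--Henriques--Kamnitzer--Rains--Singh) presentation is a fixed number, while the four-point relations in the Keel or De Concini--Procesi presentations span a strictly larger space once one accounts for the linear relations being used up first; a leading-term argument shows the resulting set of normal monomials is too large. I would then check the remaining case (De Concini--Procesi) identically, since after eliminating $Y_{\underline n}$ and the linear relations it becomes the same reduced presentation, so the obstruction is literally the same. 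This completes the proof that neither presentation admits a linear-and-quadratic Gr\"obner basis for $n \ge 4$, in either the commutative or the noncommutative setting, the noncommutative case following because a noncommutative Gr\"obner basis would in particular give the same Hilbert-series bound.
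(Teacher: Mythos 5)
Your proposal does not match the paper's argument and, more importantly, has gaps that I think are fatal.

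First, a factual error that undermines the whole plan: you write that "the trouble comes from the non-monomial quadratic relations... in the Keel presentation these are the `four-point' relations $\sum_{i,j\in S} D_S = \sum_{i,k\in S} D_S$." But those four-point relations are \emph{linear} in the $D_S$, not quadratic: they are sums of degree-one generators. In both the Keel and the De Concini--Procesi presentations the only quadratic defining relations are the monomial ones $D_S D_T = 0$ (resp. $Y_S Y_T = 0$). So the object you propose to analyse does not exist in the form you describe.

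Second, the dimension-count strategy in weight two cannot work as an obstruction. Whatever admissible ordering one fixes, one may Gauss-reduce the degree-two part of the ideal so that a linearly independent set of quadratic elements has pairwise distinct leading terms; the number of normal quadratic monomials then automatically equals $\dim H^4$. A Hilbert-series mismatch forced \emph{already in weight two} is therefore impossible, and indeed the paper never argues that way. If a quadratic Gr\"obner basis fails to exist, the failure shows up in S-polynomial computations in weight $\ge 3$ (or, as in the paper, as a forbidden value of a specific element), not as a weight-two overcount. Your proposal also tacitly treats elimination by linear Gr\"obner basis elements as an arbitrary "linear change of variables"; in fact a monomial ordering only permits solving for variables in terms of smaller monomials, so the surviving normal generators are a subset of the original $D_S$ (resp. $Y_S$), not the Etingof--Henriques--Kamnitzer--Rains--Singh generators $X_S$, and the latter presentation is known (by the main theorem of the paper) to \emph{have} a quadratic Gr\"obner basis, so no purely numerical obstruction can be present after a clean reduction.

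The paper's actual proof is short and rests on a geometric fact you do not use. Since $H^\bullet(\overline{\calM}_{0,n+1},\mathbb{Z})$ has finite rank, the powers $D_S^k$ cannot all be linearly independent, so for each $S$ either $D_S$ or $D_S^2$ must be a leading monomial of the (reduced) Gr\"obner basis. Take the smallest $D_S$ that is \emph{not} a leading monomial, so that $D_S$ is normal; admissibility of the ordering forces $D_S^2$ to be the smallest normal quadratic monomial, hence a reduced Gr\"obner basis element with leading term $D_S^2$ can have no lower normal terms at all, forcing $D_S^2 = 0$ in the ring. But boundary divisors on $\overline{\calM}_{0,n+1}$ have non-vanishing self-intersection for $n\ge 4$, a contradiction; for the De Concini--Procesi presentation one additionally uses $Y_{\underline{n}}^k\ne 0$ for $k<n-1$. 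This handles all admissible orderings simultaneously, which is exactly the step you flag as "the main obstacle" and leave unresolved.
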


For the interested reader, we remark that by standard results on PBW-bases \cite[Chapter 4, Theorem 4.1]{PP}, computing noncommutative Gr\"obner bases for the Koszul dual algebras would not help, since an algebra has a quadratic Gr\"obner basis if and only if its Koszul dual does (for the opposite monomial ordering).

\begin{proof}
We start with a detailed argument for the Keel presentation. Suppose that there exists a linear-and-quadratic Gr\"obner basis, which we may assume reduced. Since $H^\bullet(\overline{\calM}_{0,n+1},\mathbb{Z})$ is an Abelian group of finite rank, the monomials $D_S^k$, $k\ge 1$, cannot be all linearly independent, so one of them must be divisible by a leading monomial of some Gr\"obner basis element. Thus, for each $S$, either $D_S$ or $D_S^2$ appears as the leading monomial in our Gr\"obner basis. Let us consider the smallest variable $D_S$ which does not appear as a leading monomial (i.e. the smallest variable that is not eliminated). This means that $D_S^2$ appears as a leading monomial; moreover, since the Gr\"obner basis we consider is assumed reduced, this means that $D_S^2$ is a relation, since there can be no lower terms in the corresponding Gr\"obner basis element. Consequently, $D_S^2=0$ in $H^\bullet(\overline{\calM}_{0,n+1},\mathbb{Z})$.  However, the variables $D_S$ correspond to appropriate divisors on $\overline{\calM}_{0,n+1}$ for which the self-intersection is never zero for $n\ge 4$ (for $n=2$, there are no classes $D_S$, and for $n=3$, there is just one, and it squares to zero); this contradiction completes the proof. For the De Concini--Procesi presentation, the only main difference in the above argument is that one should rule out the possible relation $Y_{\underline{n}}^2=0$. However, it is known \cite[Cor.~1]{FY} that the monomial $Y_{\underline{n}}^k$ does not vanish in the cohomology algebra for $k<n-1$. Since $n-1\ge 3$, the result follows. \qed 
\end{proof}

From this proof, we obtain a useful hint: a homogeneous quadratic Gr\"obner basis must contain a square of a variable, and, more generally, must have all squares of variables as leading monomials, leading to a square-free basis of monomials in $H^\bullet(\overline{\calM}_{0,n+1},\mathbb{Z})$. There are two known descriptions of square-free bases of cohomology: the bases of Kontsevich and Manin \cite{KM} obtained from the Keel presentation, and the geometric bases of Gaiffi~\cite{Gai} that emerge in the context of the De Concini--Procesi presentation. Proposition \ref{prop:KeelNotPBW} implies that neither of those square-free bases can correspond to a Gr\"obner basis. As we shall see now, the Etingof--Henriques--Kamnitzer--Rains--Singh presentation is the ``right'' one for our purposes. One added bonus of that presentation (noticed already in \cite[p.~764]{EHKR}) is that it has an additional grading by the monoid $(2^{\underline{n}}, \cup)$ of all subsets of $\underline{n}$ with respect to taking unions, or even by the partition lattice $(\Pi_{\underline{n}},\vee)$ of all partitions of $\underline{n}$ ordered by reverse refinement; this extra grading is useful for various purposes, see e.g. the proof of Theorem~\ref{th:RHT}.

\begin{theorem}\label{th:Koszul}
The ring $H^\bullet(\overline{\calM}_{0,n+1},\mathbb{Z})$ is Koszul.
\end{theorem}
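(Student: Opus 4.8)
The plan is to use Proposition \ref{prop:GBKoszul}: it suffices to exhibit a quadratic Gröbner basis for the defining ideal $\mathbf{I}_n$ of the Etingof--Henriques--Kamnitzer--Rains--Singh presentation $\mathbf{R}_n$ of $H^\bullet(\overline{\calM}_{0,n+1},\mathbb{Z})$. By the hint extracted from Proposition \ref{prop:KeelNotPBW}, the leading monomials of the Gröbner basis should include all the squares $X_S^2$, and more generally the basis of normal monomials should be square-free; combined with the ``disjointness'' nature of the quadratic relations $X_{S\cup T}$-type and the pattern observed in Proposition \ref{prop:BasisHyperCom}, this strongly suggests the correct admissible ordering. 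Concretely, I would first order the variables $X_S$ (with $|S|\ge 3$, $S\subseteq\underline{n}$) by a carefully chosen total order refining containment --- an ``exotic'' order mimicking the reverse path-permutation lexicographic order used in the proof of Proposition \ref{prop:BasisHyperCom} --- and then take the induced (say, degree-lexicographic or a suitable block) monomial order on $\k[X_S]$.

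The key steps, in order, are: (1) fix the variable order so that in the relation $X_S(X_S-X_{S\setminus\{s\}})$ the leading monomial is $X_S^2$, in $X_S^2$ (for $|S|=3$) trivially $X_S^2$, and in $(X_{S\cup T}-X_S)(X_{S\cup T}-X_T)$ the leading monomial is $X_{S\cup T}^2$ (this forces $X_{S\cup T}$ to be larger than the proper subsets appearing, i.e.\ the order refines reverse inclusion in the relevant comparisons); (2) compute the set of normal monomials with respect to these leading monomials --- they are square-free products $X_{S_1}\cdots X_{S_k}$ subject to combinatorial constraints coming from which quadratic leading monomials are \emph{not} forced yet (so only $X_S^2$ and certain $X_S X_T$ are forbidden), and one must also check no $S$-ideal structure forces extra divisibilities; (3) count these normal monomials and verify the count equals $\operatorname{rank}_{\mathbb Z} H^\bullet(\overline{\calM}_{0,n+1},\mathbb{Z})$, using the known Poincaré polynomial (or the De Concini--Procesi / Yuzvinsky formula), very likely by an inductive argument on $n$ that matches the recursive structure already visible for $\HyperCom$ in Proposition \ref{prop:BasisHyperCom} and the stratification $\overline{\calM}_{0,n+1}$ into products of smaller $\overline{\calM}_{0,m+1}$'s --- here the extra grading by $(\Pi_{\underline n},\vee)$ should organize the bookkeeping; (4) invoke Proposition \ref{prop:GBCrit} to conclude the three families of relations form a Gröbner basis, and then Proposition \ref{prop:GBKoszul} to conclude Koszulness over any $\k$, in particular over $\mathbb{Z}$.

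The main obstacle I expect is step (3): producing the right admissible ordering so that the normal monomials are \emph{exactly} the right number --- not too few (which would violate that normal monomials span the quotient, contradicting finite rank) and not too many (which is what actually needs proof, i.e.\ that they are linearly independent, equivalently that the relations already generate enough). A naive order will typically leave the normal-monomial count too large, so the cleverness is entirely in choosing an order under which enough $X_S X_T$ products also become leading monomials of the quadratic relations (after, if needed, reducing the given generators against each other to a reduced Gröbner basis) to cut the count down to the correct value. Translating the reverse path-permutation lexicographic order on shuffle trees for $\HyperCom$ into an order on the commutative variables $X_S$ --- given that, as the introduction warns, there is in general no formal operad-to-algebra dictionary for Gröbner bases --- is the delicate point; once the order is pinned down, steps (1), (2), (4) are routine, and step (3) becomes a finite combinatorial identity provable by induction.
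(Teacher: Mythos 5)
Your overall strategy (exhibit a quadratic Gr\"obner basis for the Etingof--Henriques--Kamnitzer--Rains--Singh presentation $\mathbf{R}_n$, then invoke Propositions \ref{prop:GBCrit} and \ref{prop:GBKoszul}) is exactly the paper's, and the intuition of importing information from the shuffle-tree basis $\mathsf{B}$ of $\HyperCom$ is also correct. But your step (1) fixes the wrong ordering, and this error propagates fatally. You ask for an order ``refining containment,'' so that $X_{S\cup T}$ is larger than $X_S$ and $X_T$, making $X_{S\cup T}^2$ the leading monomial of $(X_{S\cup T}-X_S)(X_{S\cup T}-X_T)$, and $X_S^2$ the leading monomial of every $X_S(X_S-X_{S\setminus\{s\}})$. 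Under such an order, after Gaussian elimination the leading monomials of the defining relations are squares $X_I^2$ and nested products $X_I X_{I'}$ with $I'\subsetneq I$. Then any $X_S X_T$ with $S,T$ incomparable and intersecting is normal (it is square-free and involves no nested pair), yet the third-group relation exhibits it as a linear combination of nested products, all of which are leading monomials and hence not normal; this forces a linear dependence among normal monomials, so your generating set is not a Gr\"obner basis for that order, and no amount of Gaussian reduction of the given quadratic generators among themselves rescues it --- the normal-monomial count stays strictly too large.

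The crucial discovery of the paper, which you do not reproduce, is that the correct admissible ordering is emphatically \emph{not} a refinement of containment. It is the exotic order of Lemma \ref{lm:ExoticOrder}: removing the maximal element of a set makes the corresponding variable \emph{smaller} ($X_{\partial(S)}\triangleleft X_S$), whereas removing any non-maximal element makes it \emph{larger} ($X_S\triangleleft X_{S\setminus\{s\}}$ for $s\ne\max(S)$). Thus a subset can easily be \emph{larger} than a superset, e.g.\ $X_{\{1,2,4\}}\triangleright X_{\{1,2,3,4\}}$. With this order, the leading monomial of $X_S(X_S-X_{S\setminus\{s\}})$ is $X_S^2$ only when $s=\max(S)$, and for $s\ne\max(S)$ it is the \emph{cross term} $X_{S\setminus\{s\}}X_S$; likewise, the leading monomials of the third group of relations (after the Gaussian elimination performed in Step II of the paper's proof) are genuine cross terms $X_SX_T$ with $S,T$ incomparable, which is what kills exactly the right set of monomials. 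Your step (3) also diverges from the published argument: you propose a direct count by induction on $n$ via the Chow-ring recursion, while the paper instead constructs an explicit surjection $\Phi$ from the shuffle-tree basis $\mathsf{B}$ of $\HyperCom$ onto the normal monomials (Steps IV--V of the paper's proof), immediately forcing the count to be right; the recursive-counting route can in principle be pushed through, but the case analysis even at weight three is already formidable. In short: right scaffold, wrong order; the heart of the theorem is precisely the identification of the exotic ordering.
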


\begin{proof}
The proof of this result is noticeably long, so we shall begin with a short summary. According to Proposition \ref{prop:GBKoszul}, it is enough to demonstrate that the ring $\mathbf{R}_n$ has a quadratic Gr\"obner basis for a certain admissible ordering of generators. Thus, our first step is to describe a class of admissible orderings that are suitable for that purpose. After that, one has to justify that for such orderings, the ring $\mathbf{R}_n$ has a quadratic Gr\"obner basis. The proof of that claim goes in two steps. First, we shall perform Gaussian elimination to replace the defining relations of that ring by a set of relations with distinct leading monomials admitting a nice description in terms of combinatorics of finite ordered sets. Recall that normal monomials with respect to any set of generators of an ideal give an upper bound on the size of the quotient. In the case of the set of relations obtained from the defining relations of $\mathbf{R}_n$ via Gaussian elimination, we shall establish that the corresponding upper bound is sharp by mapping the basis $\mathsf{B}$ of $H_{\bullet}(\overline{\calM}_{0,n+1},\mathbb{Z})$ defined in Proposition \ref{prop:BasisHyperCom} onto the set of normal monomials surjectively, so the result of Proposition \ref{prop:GBCrit} will apply, completing the proof. 

For many aspects of our proof, it is convenient to define the ring $\mathbf{R}_n$ as a ``linear species'', i.e. to assign a similarly defined ring to any finite totally ordered set $A$ (and not just $A=\underline{n}$); we denote the corresponding ring by~$\mathbf{R}_A$. Throughout  the proof, we shall frequently use, for a non-empty finite ordered set $S$, the operation that removes from $S$ its maximal element; we denote this operation by~$\partial(S)$.

\smallskip

\noindent
\emph{Step I: choice of an ordering. } Our definition of a class of suitable admissible orderings of monomials begins with the following lemma. 

\begin{lemma}\label{lm:ExoticOrder}
Let $A$ be a finite ordered set. Consider the following binary relation $\prec'$ on the set $2^A$ of all subsets of $A$: we say that $I\prec' J$ if either $J=I\setminus \{a\}$ where $a\in I$, $a\ne\max(I)$, or $I=\partial(J)$. Let $\prec$ be the transitive closure of $\prec'$. Then $\prec$ is a partial order.  
\end{lemma}

\begin{proof}
It suffices to show that $\prec$ is antisymmetric. Assume the contrary, and take two subsets $I,J\subseteq A$ for which $I\prec J$ and $J\prec I$. This means that $I$ can be obtained from $J$ by a sequence of steps, each either removing the maximal element or adding a non-maximal element, and $J$ can be obtained from $I$ in the same way. Under each of these operations, the maximal element of a set does not increase, so if we are able to start from the set~$I$ and return to it after a few operations, then the maximal element remains unchanged, so at each step we just add a non-maximal element. Clearly, these steps increase cardinality, which is a contradiction.\qed
\end{proof} 

Let us fix some extension of the partial order $\prec$ from Lemma~\ref{lm:ExoticOrder} to a total order of $2^{A}$; we denote that extension by $\triangleleft$. We may use this order to define a graded lexicographic ordering of the generators $X_I$ ($I\subseteq A$, $|I|\ge 3$) of the ring $\mathbf{R}_A$; we denote that ordering by the same symbol~$\triangleleft$. The rest of the proof is dedicated to showing that for each such ordering $\triangleleft$, the ideal $\mathbf{I}_A$ of defining relations of $\mathbf{R}_A$ has a quadratic Gr\"obner basis.

\smallskip

\noindent
\emph{Step II: Gaussian elimination on the defining relations. } It is easy to see that among the leading monomials of the defining relations of the ring $\mathbf{R}_A$ there are repetitions. Let us apply a version of the Gaussian elimination procedure to get rid of repetitions, paying particular attention to the set of leading monomials thus obtained. 

The first group of elements generating $\mathbf{I}_A$ consists of $X_S^2$ for $|S|=3$; these elements are themselves monomials, we shall call them monomials of type 1a.  

The second group of elements consists of $X_S(X_S-X_{S\setminus\{s\}})$, where $s\in S$. Note that we have $X_{\partial(S)}\triangleleft X_S$ and $X_S\triangleleft X_{S\setminus\{s\}}$ for $s\ne\max(S)$. Thus, the relations of this group have distinct leading monomials: $X_S^2$, which we shall call a monomial of type 1b, and all $X_{S\setminus\{s\}}X_S$ for $s\ne\max(S)$, which we shall call monomials of type 2. 

Finally, we consider the elements $(X_{S\cup T}-X_S)(X_{S\cup T}-X_T)$ of the third group. This will lead us to some slightly nontrivial combinatorics; to simplify the notation, we put $I:=S\cup T$. There are two possibilities to consider, $\max(I)\in S\cap T$ and $\max(I)\notin S\cap T$. 

If $\max(I)\in S\cap T$, then $X_I\triangleleft X_S$ and $X_I\triangleleft X_T$, so $X_SX_T$ is the leading monomial of $(X_I-X_S)(X_I-X_T)$; this element does not appear in any other relation. We shall call such elements monomials of type 3a. 

Suppose that $\max(I)\notin S\cap T$. Without loss of generality, $\max(S)=\max(I)$ but $\max(T)\ne\max(I)$. Let us note that if $U$ is the shortest initial interval of $I$ containing~$T$ and $T\ne U$, then we may write 
 \[
(X_I-X_S)(X_I-X_T)=(X_I-X_S)(X_I-X_U)+(X_I-X_S)(X_U-X_T),
 \]
so subtracting the relation corresponding to $S$ and $U$, we obtain a relation  
 \[
(X_I-X_S)(X_U-X_T) .
 \]
Note that we have $X_I\triangleleft X_S$ and $X_U\triangleleft X_T$, so that as above, $X_SX_T$ is the leading monomial of this relation. We shall call such elements monomials of type 3b. 

It remains to consider the remaining relations of the third group, that is the relations 
$(X_I-X_S)(X_I-X_U)$, where $U$ is an initial interval of $I$ such that $S\cap U\ne\emptyset$ and $S\cup U=I$ (different initial intervals like that arise for different $T$ above). For a given subset $S$, suppose that $\partial(I)$, \ldots, $\partial^{k}(I)$ are all the proper initial intervals of $I$ satisfying this property. We note that
 $
X_I\triangleleft X_S \text{ and  }X_{\partial^{k}(I)}\triangleleft \cdots \triangleleft X_\partial(I)\triangleleft X_{I}
 $.
It is clear that the linear span of the relations
$(X_I-X_S)(X_I-X_{\partial^m(I)})$ with $m=1,\ldots, k$  
is the same as the linear span of the relations
 \[
(X_I-X_S)(X_I-X_{\partial(I)}), (X_I-X_S)(X_{\partial(I)}-X_{\partial^{2}(I)}),\ldots, (X_I-X_S)(X_{\partial^{k-1}(I)}-X_{\partial^k(I)}) .
 \]
Those relations have distinct leading monomials 
$X_SX_I, X_SX_{\partial(I)}, \ldots, X_SX_{\partial^{k-1}(I)}$. 
If $|S|<n-1$, these monomials are different from all the leading monomials considered above; in this case, we shall call these elements monomials of type 4a. For $|S|=n-1$, $X_SX_I$ is a leading monomial of type 2. Subtracting the corresponding relation, we may replace $(X_I-X_S)(X_I-X_{\partial(I)})$ by $(X_I-X_S)X_{\partial(I)}$, then subtracting this relation we may replace $(X_I-X_S)(X_{\partial(I)}-X_{\partial^{2}(I)})$ by $(X_I-X_S)X_{\partial^{2}(I)}$, and continuing in a similar way, we may replace $(X_I-X_S)(X_{\partial^{k-1}(I)}-X_{\partial^k(I)})$ by $(X_I-X_S)X_{\partial^{k}(I)}$. The leading monomials of these are $X_SX_{\partial(I)}$, \ldots, $X_SX_{\partial^{k-1}(I)}$, $X_SX_{\partial^k(I)}$, which are now pairwise distinct and different from any other monomials listed above.  We shall call these elements monomials of type 4b. 

\smallskip 

We summarise this step of the proof in the following statement. 

\begin{lemma}
The ideal of relations of the ring $\mathbf{R}_n$ is generated by the following quadratic elements with pairwise distinct leading terms:
\begin{itemize}
\item[\textbullet] $X_S^2$, $|S|=3$ (type 1a),
\item[\textbullet] $X_S^2-X_{\partial(S)}X_S$, $|S|>3$ (type 1b),
\item[\textbullet] $X_{S\setminus\{s\}}X_S-X_S^2$, $|S|>3$, $\max(S)\ne s\in S$ (type 2),
\item[\textbullet] $(X_S-X_{S\cup T})(X_T-X_{S\cup T})$, $S\cap T\ne\varnothing$, $S\not\subset T$, $T\not\subset S$, $\max(S)=\max(T)$ (type 3a),
\item[\textbullet] $(X_S-X_{S\cup T})(X_T-X_{S\cup T})$, $S\cap T\ne\varnothing$, $S\not\subset T$, $T\not\subset S$, $\max(S)>\max(T)$, $T$ not an initial interval of $S\cup T$ (type 3b),
\item[\textbullet] $(X_S-X_{S\cup T})(X_{\partial^{p-1}(S\cup T)}-X_{\partial^{p}(S\cup T)})$, $|(S\cup T)\setminus S|>1$, $p\ge1$, $S\cap \partial^{p}(S\cup T)\ne\varnothing$, $S\not\subset \partial^{p}(S\cup T)$, $\partial^{p}(S\cup T)\not\subset S$ (type 4a), 
\item[\textbullet] $(X_S-X_{S\cup T})X_{\partial^{p}(S\cup T)}$, $|(S\cup T)\setminus S|=1$, $p\ge1$, $S\cap \partial^{p}(S\cup T)\ne\varnothing$, $S\not\subset \partial^{p}(S\cup T)$, $\partial^{p}(S\cup T)\not\subset S$ (type 4b).
\end{itemize}
\end{lemma}

The rest of the proof is dedicated to showing that these generators form a Gr\"obner basis.

\smallskip

\noindent
\emph{Step III: normal quadratic monomials. } Let us describe all quadratic monomials that are normal with respect to our modified set of relations, that is all quadratic monomials that do not occur among the leading terms of those relations. In this description, a new notion important throughout the proof emerges; we begin with giving it a proper name.

\begin{definition}[MI-complement]
Suppose that $S$ and $T$ are two proper subsets of a finite ordered set $A$; as always in this paper, we assume $|S|, |T|\ge 3$. We shall say that $T$ is an \textsl{MI-complement} (minimal interval complement) of $S$ if the following conditions hold simultaneously:
\begin{itemize}
\item[\textbullet] the intersection of $S$ and $T$ is not empty,
\item[\textbullet] $T$ is an initial interval of $S\cup T$ (that is, if $s\in S\cup T$ and $s\le\max(T)$, then $s\in T$),
\item[\textbullet] among all the initial intervals of $S\cup T$ of cardinality at least three satisying the above conditions, $T$ is the shortest one.  
\end{itemize}
We shall say that $T$ is an \emph{essential MI-complement} of $S$ if it is an MI-complement of $S$ and $|T\setminus S|>1$.
\end{definition}

The definition of an essential MI-complement is a key combinatorial definition of this paper, so we feel that it is important to give several examples.

\begin{example}
Let $A=\underline{6}=\{1,2,3,4,5,6\}$. The set $T=\{1,2,3,4,5\}$ is an essential MI-complement of $S=\{1,4,6\}$ because it is an initial interval of $S\cup T=\{1,2,3,4,5,6\}$, and $S\cup\partial(T)=\{1,2,3,4,6\}\ne S\cup T$. On the contrary, $T'=\{1,2,3,4\}$ is not an essential MI-complement of $S$ even though $T'$ is an initial interval of $S\cup T'=\{1,2,3,4,6\}$: the problem is that $S\cup\partial(T')=\{1,2,3,4,6\}=S\cup T'$, so there is a shorter initial interval that can be taken. The set $T''=\{1,2,4\}$ is an MI-complement of $S$ because it is an initial interval of $S\cup T''=\{1,2,4,6\}$, and it is of length three, so there is nothing shorter; however, this MI-complement is not essential because $T''\setminus S=\{2\}$ is a set of cardinality one. The set $U=\{1,2,4\}$ is an essential MI-complement of each of the sets $V'=\{1,5,6\}$, $V''=\{2,5,6\}$, and $V'''=\{4,5,6\}$. 
\end{example}

\begin{lemma}\label{lm:NormalWeight2}
Suppose that $S, T\subseteq\underline{n}$ and $|S|,|T|\ge 3$. A commutative quadratic monomial $X_SX_T$ is normal with respect to the modified set of generators of the ideal $\mathbf{I}_A$ if and only if $\max(S)\ne\max(T)$ and one of the following three conditions hold:
\begin{itemize}
\item[\textbullet] the subsets $S$ and $T$ are disjoint,
\item[\textbullet] the subsets $S$ and $T$ are comparable,  
\item[\textbullet] one of them is an essential MI-complement of the other. 
\end{itemize}
\end{lemma}

\begin{proof}
Suppose $X_SX_T$ is a normal monomial. Because of commutativity, we may assume $\max(S)\ge \max(T)$. All the monomials $X_SX_T$ with disjoint $S$ and $T$ are manifestly normal; of course, in this case $\max(S)\ne \max(T)$. The only monomials $X_SX_T$ with $T\subseteq S$ that are not normal are the monomials of type 1a and 1b (all squares), monomials of type 2 (all monomials with $\max(T)\in S$ and $|S\setminus T|=1$), and the monomials that appear first in the lists of the type 4a (all monomials with $\max(T)\in S$ and $|S\setminus T|>1$), so normality is equivalent to $\max(S)>\max(T)$. It remains to consider the case of monomials $X_SX_T$ with $S$ and $T$ neither disjoint nor comparable. Because of the leading monomials of type 3a, we cannot have $\max(S)=\max(T)$ in such a normal monomial. If $T$ is not an initial interval of $S\cup T$, we find the monomial $X_SX_T$ among the monomials of type 3b, so it is not normal. Finally, if $T$ is an initial interval of $I=S\cup T$, then for $|I\setminus S|>1$ the monomial $X_SX_T$ is among the monomials of type 4a unless $T$ is the shortest initial interval of $I$ satisfying $I=S\cup T$, and for $|I\setminus S|=1$ all the monomials $X_SX_T$ are among the monomials of type 4b. It remains to notice that $I\setminus S=(S\cup T)\setminus S=T\setminus S$ to complete the proof. \qed
\end{proof} 

\smallskip

\noindent
\emph{Step IV: normal monomials and shuffle trees. } We shall now describe a one-to-one correspondence between the elements of the shuffle tree basis $\mathsf{B}$ of the operad $\HyperCom$ and commutative monomials whose divisors of weight two are all among the monomials described in Lemma \ref{lm:NormalWeight2}; as it was already done for the ring $\mathbf{R}_n$, we consider the shuffle operad $\HyperCom$ as a linear species, so that inputs of an operation may be indexed by elements of a finite ordered set~$A$. 

We begin with a recipe of how to construct the commutative monomial $\Phi(\tau)$ corresponding a shuffle tree $\tau\in \mathsf{B}$.
If $\tau=\stackrel{a}{\mid}$, we put $\Phi(\tau)=1\in\mathbf{R}_{\{a\}}$. Further, we define a particular class of shuffle trees that will be useful in the proof. Recall that a shuffle tree is called a \textsl{right comb} if for each its internal vertex, all the inputs of that vertex except for possibly the rightmost one are leaves. A specialty of right combs that distinguishes them from any other shuffle trees is that each of them has exactly one possible leaf labelling that makes it a shuffle tree: the one where the labels exhibit a \emph{global} increase from the left to the right. We shall be interested in a particular kind of right combs, those where each internal vertex except for possibly the root has exactly two leaves. We shall denote such tree $\tau_{k,\ell}$, where $k\ge 2$ is the number of leaves of the root vertex and $l\ge 0$ is the number of internal vertices different from the root. For example, we have 
 \[
\tau_{4,3}=
\vcenter{
\xymatrix@R=1mm@C=2mm{
&&&&&&6\ar@{-}[dr]& &7\ar@{-}[dl]\\
&&&&&5\ar@{-}[dr]& &*++[o][F-]{}\ar@{-}[dl]\\
&&&&4\ar@{-}[dr]& &*++[o][F-]{}\ar@{-}[dl]\\
&&1\ar@{-}[drr]&2\ar@{-}[dr]& 3\ar@{-}[d]&*++[o][F-]{}\ar@{-}[dl]\\
&&&&*++[o][F-]{}\ar@{-}[d]&&\\
&&&&
}} .
 \]
By definition of the basis $\mathsf{B}$, for each element $\tau$ of that basis there exist unique numbers $k$ and $\ell$ as well as unique shuffle tree monomials $\tau_1$, \ldots, $\tau_k$, $\tau_{k+1}$, \ldots, $\tau_{k+\ell}$ each of which is either the trivial tree or a tree whose root vertex has strictly more than two children, such that $\tau$ is obtained by grafting the tree monomials $\tau_1$, \ldots, $\tau_k$, $\tau_{k+1}$, \ldots, $\tau_{k+\ell}$ at the leaves of~$\tau_{k,\ell}$. Let us put 
 \[
S_p=\Bigl\{ a\in\mathrm{Leaves}(\tau)\colon a\le \min(\mathrm{Leaves}(\tau_p))\Bigr\}. 
 \]
We define
 \[
\Phi(\tau)=\Phi(\tau_1)\cdots\Phi(\tau_{k+\ell}) \prod_{j=3}^{k}X_{S_{j+\ell}} 
 \] 
Here we use the convention according to which a product over the empty set is equal to one, so for $k=2$ we have $\Phi(\tau)=\Phi(\tau_1)\cdots\Phi(\tau_{2+\ell})$. Note that since we have the product starting from $j=3$, each set $S_j$ contains $\min(T_1)$, $\min(T_2)$ and $\min(T_3)$, so $|S_j|\ge 3$, and thus the monomial $\Phi(\tau)$ is indeed a monomial in our generators $X_S$. 

\begin{lemma}
For each $\tau\in\mathsf{B}$, all divisors of weight two of the monomial $\Phi(\tau)$ are among the monomials described in Lemma \ref{lm:NormalWeight2}. 
\end{lemma}

\begin{proof}
We prove this by induction on the arity of $\tau$. For arity one, there is nothing to prove. Suppose that that the statement is true for any arity less than $n$, and let $\tau$ be an element of $\mathsf{B}$ of arity $n$ obtained by grafting tree monomials $\tau_1$, \ldots, $\tau_k$, $\tau_{k+1}$, \ldots, $\tau_{k+\ell}$ at the leaves of $\tau_{k,\ell}$. Suppose that the given quadratic monomial $m=X_{S}X_{S'}$ is a divisor of $\Phi(\tau)$. If $m$ is a divisor of $\Phi(\tau_i)$ for some $i$, then it is normal by the induction hypothesis. If $m$ is a divisor of $\prod_{j=3}^{k}X_{S_{j+\ell}}$, then the subsets $S$ and $S'$ are comparable, as our product is defined as the product of some initial intervals of the set of leaves of $\tau$. If the generators $X_{S}$ and $X_{S'}$ are divisors of two different monomials $\Phi(\tau_i)$ and $\Phi(\tau_j)$, then $S$ and $S'$ are disjoint, since the monomial corresponding to any shuffle tree uses only the generators $X_U$ where $U$ is a subset of the set of leaves. Finally, suppose that $X_S$ is a divisor of $\Phi(\tau_i)$ for some $i$ and $S'=S_{j+\ell}$ for some $j\ge 3$. In this case, we may have the following situations:

\begin{itemize}
\item If $i>j+\ell$, then $\min(S)\ge\min(\mathrm{Leaves}(\tau_i))>\min(\mathrm{Leaves}(\tau_{j+\ell}))=\max(S')$, so $S$ and $S'$ are disjoint.
\item If $i=j+\ell$, there are two possibilities to consider.
\begin{itemize}
\item If $\min(S)>\min(\mathrm{Leaves}(\tau_i))=\max(S')$, then $S$ and $S'$ are disjoint.
\item If $\min(S)=\min(\mathrm{Leaves}(\tau_i))$, we remark that if $s<\max(S')$ and $s\in S$, then $s\in S'=S_{j+\ell}$ by the construction of $S_{j+\ell}$, so $S'$ is an initial interval of $S\cup S'$. Note that we have $\max(S')=\max(S_{j+\ell})=\min(\mathrm{Leaves}(\tau_i))=\min(S)<\max(S)$; moreover, this inequality shows that $\partial(S')$ is disjoint from $S$, so $S'$ is an MI-complement of $S$. This MI-complement is essential since $S'\setminus S$ contains as a subset the set 
$\{\min(\mathrm{Leaves}(\tau_1)),\ldots,\min(\mathrm{Leaves}(\tau_{2+\ell}))\}$
of cardinality at least $2$. 
\end{itemize}
\item If $i<j+\ell$, we start with remarking that since $\max(S)\in\mathrm{Leaves}(\tau_i)$ and $\max(S')=\min(\mathrm{Leaves}(\tau_{j+\ell}))\in\mathrm{Leaves}(\tau_{j+\ell})$, and the sets of leaves of $\tau_i$ and $\tau_{j+\ell}$ are disjoint, we have $\max(S')\notin S$. Now there are two possibilities to consider.
\begin{itemize}
\item If $\max(\mathrm{Leaves}(\tau_i))<\min(\mathrm{Leaves}(\tau_{j+\ell}))=\max(S')$, then $S\subset S'$, so since $\max(S')\notin S$, the monomial is normal.
\item If $\max(\mathrm{Leaves}(\tau_i))>\min(\mathrm{Leaves}(\tau_{j+\ell}))=\max(S')$, we remark that if $s<\max(S')$ and $s\in S$, then $s\in S'=S_{j+\ell}$ by the construction of $S_{j+\ell}$, so it is an initial interval of $S\cup S'$. If $S'=S\cup S'$, we have $S\subset S'$, so since $\max(S')\notin S$, the monomial is normal. If, on the other hand, $S'$ is a proper initial interval of $S\cup S'$, then the property $\max(S')\notin S$ implies that $S'$ is an MI-complement of $S$. This MI-complement is essential since the difference $S'\setminus S$ contains $\min(\mathrm{Leaves}(\tau_{j+\ell}))$ as well as at least one of the elements $\min(\mathrm{Leaves}(\tau_1))$ and $\min(\mathrm{Leaves}(\tau_2))$. 
\end{itemize}
\end{itemize}
This exhausts all possibilities, proving the required statement. \qed
\end{proof}

We should establish that $\Phi$ is a one-to-one correspondence. For that, we shall construct its inverse $\Psi$. The following definition will be useful for that.

\begin{definition}[Decomposable monomial]
Let us call a monomial $m\in \mathbf{R}_A$ decomposable if it can be written in the form $m=m'm''$, where $m'\in \mathbf{R}_{A'}$ and $m''\in \mathbf{R}_{A''}$ for two non-empty disjoint subsets $A',A''$ of~$A$. (In particular, if a monomial $m$ does not use a certain element $k\in\underline{n}$, it may be written as $m\cdot 1$, where $1\in\mathbf{R}_{\{k\}}$, from which it follows that it is decomposable.)
\end{definition}

Suppose that $m$ is a monomial all whose divisors of weight two are all among the monomials described in Lemma \ref{lm:NormalWeight2}. First, we assume that the monomial $m$ is decomposable, and write it as $m=m_1m_2\cdots m_k$, where all individual factors $m_i\in\mathbf{R}_{A_i}$ are not decomposable, and 
 \[ \min(A_1)<\min(A_2)<\cdots<\min(A_k).\] 
 We define 
 \[
\Psi(m)=
\vcenter{
\xymatrix@R=1mm@C=2mm{
&&&&&&\Psi(m_{k-1})\ar@{-}[dr]& &\Psi(m_k)\ar@{-}[dl]\\
&&&&&\Psi(m_j)\ar@{-}[dr]& &*++[o][F-]{}\ar@{..}[dl]\\
&&&&\Psi(m_2)\ar@{-}[dr]& &*++[o][F-]{}\ar@{..}[dl]\\
&&&\Psi(m_1)\ar@{-}[dr]& &*++[o][F-]{}\ldots\ar@{-}[dl]\\
&&&&*++[o][F-]{}\ar@{-}[d]&&\\
&&&&
}}
 \]
It is clear that thus obtained tree monomial is normal. 

Assume now that $m\in \mathbf{R}_A$ is not decomposable. We shall construct the inverse by induction on weight of~$m$. Since $m$ is not decomposable, it is divisible by a generator $x_S$ with $\max(A)\in S$, and since $m$ is normal, it has exactly one such divisor.

Suppose first that $S=A$, so that $m=m'x_A$. The monomial $m'\in \mathbf{R}_{\partial(A)}$ is manifestly normal. Let us take the tree monomial $\Psi(m')$. As we saw before, this monomial is obtained by taking a certain right comb $\tau_{k,\ell}$ and grafting certain tree monomials $\tau_1$, \ldots, $\tau_k$, $\tau_{k+1}$, \ldots, $\tau_{k+\ell}$ at its inputs. To define the tree monomial $\Psi(m)$, we replace 
$\tau_{k,\ell}$ by $\tau_{k+1,\ell}$, graft the tree monomials $\tau_1$, \ldots, $\tau_k$, $\tau_{k+1}$, \ldots, $\tau_{k+\ell}$ at its first $k+\ell$ inputs, and make the last input a leaf labelled $\max(A)$. It is clear that thus obtained tree monomial is normal.

Suppose now that $S\subsetneq A$. In this case, for any other $x_T$ dividing $m$, we have either that $T\subset S$, or that $T$ and $S$ are disjoint, or that $T$ is an MI-complement of $S$. Let us factorise $m=m'm''$, where $m'$ is the product of all generators dividing $m$ that correspond to subsets of $S$. We now define a new monomial $f(m'')$ by keeping all generators $x_T$ dividing $m''$ for $T$ disjoint from $S$ unchanged, and replacing every generator $x_T$ for $T$ an MI-complement of $S$ by $x_{T\setminus (S\setminus \{\min(S)\})}$. Let us remark that for an MI-complement $T$ of $S$, we have $|T\setminus S|>1$, so $|T\setminus (S\setminus \{\min(S)\})|=|(T\setminus S)\cup \{\min(S)\}|>2$, and so all the generators are indexed by subsets containing at least three elements. 

\begin{lemma}
The monomial $f(m'')$ is normal.
\end{lemma}

\begin{proof}
We should establish that all divisors of $f(m'')$ of weight two are normal. We begin with remarking that the transformation of generators under $f$ does not change the properties of disjointness or inclusion of the corresponding subsets, thus we need to just consider what happens to two generators $x_{T_1}$ and $x_{T_2}$ where $T_1$ is an MI-complement of $T_2$. Note that the formula $x_T\leadsto x_{T\setminus (S\setminus \{\min(S)\})}$ is ``universal'': it also applies if $T$ is disjoint from $S$. 

The MI-complement condition implies in particular that we either have $|T_1|=3$ (then it is automatically the shortest possible) or $|T_1|>3$ and $\max(T_1)=\min(T_2)$ (in this case, $\partial(T_1)$ and $T_2$ are disjoint), or $|T_1|>3$ and $\max(T_1)\notin T_2$ (in this case, $\partial(T_1)\cup T_2\ne T_1\cup T_2$ so $T_1$ is the shortest initial interval complement). In the first of those cases, we already saw that $T_1$ remains unchanged under the given transformation. In the second case, $\max(T_1)=\min(T_2)$ remains unchanged also, since $\min(T_2)$ either does not belong to $S$ or is equal to $\min(S)$ (if $T_2$ is an MI-complement of $S$). In the third case, either $\max(T_1)$ does not belong to $S$, or it is equal to $\min(S)$ (if $T_1$ is an MI-complement of $S$), so it also remains unchanged under our transformation. This already shows that the results of transformation of $T_1$ and $T_2$ are neither disjoint nor comparable. The property of $T_1$ to be an initial interval of $T_1\cup T_2$ is clearly preserved under removing all elements of $S\setminus \{\min(S)\}$. The property stating that every element of $T_2$ not exceeding $\max(T_1)$ belongs to $T_1$ is also preserved under removing all elements of $S\setminus \{\min(S)\}$, as the maximum can only become smaller. Finally, as we already discussed above, the three possibilities to guarantee the minimality are all preserved too. \qed
\end{proof}

We define
 \[
\Psi(m)=\Psi(f(m''))\circ_{\min(S)}\Psi(m').
 \] 
Since $m'$ is divisible by $x_S$, it falls under the case that we considered, and we know that the root vertex of $\Psi(m')$ is non-binary, so the normality of $\Psi(f(m''))$ and $\Psi(m')$ implies normality of $\Psi(m)$. 

The last step of the proof is the following result that is crucial for us. 
\begin{lemma}
We have $\Phi(\Psi(m))=m$ for every normal monomial $m\in\mathbf{R}_A$. 
\end{lemma}

\begin{proof}
If $m$ is decomposable, then, in the notation above, we have 
 \[
\Phi(\Psi(m))=\Phi(\Psi(m_1))\cdots\Phi(\Psi(m_k))=m_1\cdots m_k=m  
 \] 
by the induction hypothesis. If $m$ is indecomposable and $m=m'x_A$, then the inductive definitions of the maps $\Psi$ and $\Phi$ show that 
$\Phi(\Psi(m))=\Phi(\Psi(m')) x_A=m'x_A=m$ by the induction hypothesis. Finally, if $m$ is indecomposable, and it is divisible by a generator $S$ such that $\max(A)\in S$, $S\ne A$, then, in the notation above,  
 \[
\Phi(\Psi(m))=\Phi(\Psi(f(m''))\circ_{\min(S)}\Psi(m')).
 \]
Note that since $m'$ is indecomposable, the right comb $\tau_{k,\ell}$ from the definition of the map $\Phi$ is the same for the tree $\Psi(f(m''))$ and the tree $\Psi(f(m''))\circ_{\min(S)}\Psi(m')$. Since the transformation $f$ does not change the minima of sets, it follows that all the MI-complements $S'$ of $S$ such that $S'\cup S=A$ will be restored correctly on the first step of the inductive definition of $\Phi$, and all the other MI-complements will be restored correctly by the inductive hypothesis.\qed
\end{proof}

To conclude the proof of the main result, we note that the last Lemma implies that the map $\Phi$ is surjective. Since the number of normal tree monomials is equal to the dimension of the arity $n$ component of the homology operad $H_\bullet(\overline{M}_{0,|A|+1},\mathbb{Q})$, which is the same as the rank of $\mathbf{R}_A$, this means that the number of normal monomials does not exceed that rank. The normal monomials with respect to any set of relations form a spanning set, so implies that the normal monomials must form a basis, and Proposition \ref{prop:GBCrit} ensures that the Gauss-reduced relations form a Gr\"obner basis. It follows from Proposition \ref{prop:GBKoszul} that the ring $\mathbf{R}_n$ is Koszul. \qed
\end{proof}

\section{Application to homotopy invariants of loop spaces}\label{sec:loop}

Let us discuss some applications of our result to the computation of homotopy invariants of loop spaces of moduli spaces of stable curves. 
Our main technical tool here is the framework of Koszul spaces developed over $\mathbb{Q}$ by Berglund~\cite{Ber} and extended for any field by Berglund and B\"orjeson~\cite{BBhighly}. Let us recall the main relevant result.

\begin{proposition}[{\cite[Theorem 2.15]{BBhighly}}]\label{prop:KoszulSp}
Let $\k$ be a field, and let $X$ be a connected space of finite $\k$-type. The following statements are equivalent:
\begin{enumerate}
\item The space $X$ is both formal and coformal over $\k$.
\item The space $X$ is formal over $\k$, and $H^\bullet(X,\k)$ is a Koszul algebra.
\item The space $X$ is coformal over $\k$, and $H_\bullet(\Omega X,\k)$ is a Koszul algebra.
\end{enumerate}
In such a situation, the associative algebras $H^\bullet(X,\k)$ and $H_\bullet(\Omega X,\k)$ are Koszul dual to each other. 
\end{proposition}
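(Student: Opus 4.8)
The plan is to reduce the equivalence to Koszul duality of augmented algebras, by passing through the two standard algebraic models of the space: the cochain algebra $C^\bullet(X;\k)$, a commutative (in characteristic zero) or $E_\infty$ (in general) dg algebra with homology $H^\bullet(X;\k)$, and the chain algebra $C_\bullet(\Omega X;\k)$ of the based loop space, an associative dg algebra with homology $H_\bullet(\Omega X;\k)$. First I would record the inputs that connect these. By definition, $X$ is \emph{formal} over $\k$ exactly when $C^\bullet(X;\k)\simeq H^\bullet(X;\k)$ as $E_\infty$-algebras, and \emph{coformal} exactly when $C_\bullet(\Omega X;\k)\simeq H_\bullet(\Omega X;\k)$ as associative algebras (equivalently, when its homotopy Lie algebra is formal). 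Next comes Adams' cobar equivalence, valid in the setting under consideration: $C_\bullet(\Omega X;\k)$ is naturally quasi-isomorphic, as an associative algebra, to the cobar construction on the chain coalgebra $C_\bullet(X;\k)$ --- that is, to the bar--cobar Koszul dual of $C^\bullet(X;\k)$ --- whence, when $X$ is formal with $H:=H^\bullet(X;\k)$, one gets $C_\bullet(\Omega X;\k)\simeq\mathrm{Cobar}(H^\vee)$ and $H_\bullet(\Omega X;\k)\cong\operatorname{Ext}_H(\k,\k)$ as graded algebras; dually, when $X$ is coformal, $H^\bullet(X;\k)\cong\operatorname{Ext}_{H_\bullet(\Omega X;\k)}(\k,\k)$. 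The last input is the purely algebraic statement (see \cite{PP}): an augmented $\k$-algebra $A$ is Koszul if and only if the cobar construction on its dual coalgebra --- equivalently, the bar construction on $A$ --- is \emph{formal}, in which case its homology is the quadratic dual algebra $A^{!}$, which is again Koszul and satisfies $(A^{!})^{!}\cong A$.

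Granting these, the implications are short. For $(2)\Rightarrow(1)$: if $X$ is formal and $H=H^\bullet(X;\k)$ is Koszul, then $C_\bullet(\Omega X;\k)\simeq\mathrm{Cobar}(H^\vee)$, which is formal by Koszulness of $H$, with homology $H^{!}$; hence $C_\bullet(\Omega X;\k)\simeq H_\bullet(\Omega X;\k)\cong H^{!}$ as associative algebras, so $X$ is coformal and $(1)$ holds. The implication $(3)\Rightarrow(1)$ is the mirror image, with $C^\bullet(X;\k)$ and $C_\bullet(\Omega X;\k)$ exchanged and $\operatorname{Ext}_{H_\bullet(\Omega X;\k)}(\k,\k)\cong H^\bullet(X;\k)$ used in place of its counterpart. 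For $(1)\Rightarrow(2)$: if $X$ is both formal and coformal, then $C^\bullet(X;\k)\simeq H$ and $C_\bullet(\Omega X;\k)\simeq H_\bullet(\Omega X;\k)$ as algebras, so $\mathrm{Cobar}(H^\vee)$ is formal; the ``formal cobar construction $\Rightarrow$ Koszul'' direction of the last input then forces $H$ to be Koszul, giving $(2)$; exchanging the two sides gives $(1)\Rightarrow(3)$. Thus $(1)\Leftrightarrow(2)\Leftrightarrow(3)$, and in each case the same computation identifies $H^\bullet(X;\k)$ and $H_\bullet(\Omega X;\k)$ as quadratic duals, which is the final assertion.

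I expect the real work to lie not in this chain of implications but in securing the ``models'' step over an arbitrary field. In characteristic zero it is Quillen--Sullivan theory; in positive characteristic one must work with $E_\infty$-algebras and check that Adams' cobar equivalence and the accompanying Eilenberg--Moore comparison still hold for the relevant class of spaces --- which is precisely where a simple-connectivity or nilpotent $\k$-completeness hypothesis is needed --- and that Koszul duality between the commutative/$E_\infty$ world and the associative/Lie world is an honest equivalence on the pertinent conilpotent, finite-type subcategories. The only non-formal point in the algebraic input is the direction asserting that formality of the cobar construction on $A$ forces $A$ to be Koszul; for this I would use uniqueness of the minimal $A_\infty$-model of $\operatorname{Ext}_A(\k,\k)$ together with its internal weight grading to conclude that $\operatorname{Ext}_A(\k,\k)$ is concentrated on the diagonal, one of the standard characterisations of a Koszul algebra. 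Everything else is bookkeeping.
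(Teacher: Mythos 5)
The paper does not prove this proposition; it is imported verbatim from Berglund--B\"orjeson \cite[Theorem 2.15]{BBhighly}, so there is no internal argument to compare against. Your sketch reproduces the strategy of that reference (and of Berglund's earlier rational version): pass to the models $C^\bullet(X;\k)$ and $C_\bullet(\Omega X;\k)$, use Adams' cobar equivalence together with Eilenberg--Moore to identify $H_\bullet(\Omega X;\k)$ with $\operatorname{Ext}_{H^\bullet(X;\k)}(\k,\k)$ under formality, and invoke the algebraic equivalence between Koszulness of $A$ and formality of its (co)bar construction via diagonal concentration of $\operatorname{Ext}$, so the proposal is correct and follows essentially the same route as the cited proof.
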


This result justifies the following definition.

\begin{definition}[Koszul space]
Let $\k$ be a field, and let $X$ be a connected space of finite $\k$-type. The space $X$ is said to be \emph{$\k$-Koszul} if either of the equivalent conditions of Proposition \ref{prop:KoszulSp} hold.
\end{definition}

As a first application of our main result, we shall use the theory of Koszul spaces to obtain a complete description of the rational homotopy Lie algebras and an estimate of growth of ranks of rational homotopy groups.

\begin{theorem}\label{th:RHT}
The rational homotopy Lie algebra $\pi_*(\Omega\overline{\calM}_{0,n+1})\otimes\mathbb{Q}$ is isomorphic to the graded Lie algebra generated by odd elements $Y_S$, where $S\subseteq\underline{n}$, $|S|\ge 3$, subject to relations 
\begin{gather*}
[Y_S,Y_T]=0, \quad \text{for each choice of $S$ and $T$ with } S\cap T=\varnothing, \\
\sum_{\substack{\{T_1,T_2\}\subset 2^S \colon\\ T_1\cap T_2\ne \varnothing, T_1\cup T_2=S}}[Y_{T_1},Y_{T_2}]=0, \quad \text{for } |S|>3, \\ 
\left[Y_T,\sum_{T\cup K=S} Y_K\right]=0, \quad \text{for each choice of $S$ and $T$ with } T\subset S, |S\setminus T|>1. 
\end{gather*}
This Lie algebra is finite dimensional for $n=2,3$, and has exponential growth for all $n\ge 4$. The generating function for ranks of rational homotopy groups is 
 \[
\sum_{m\ge 1}\frac{\mu(m)}{m}\ln\left(\frac{1}{f_n((-t)^m)}\right) , 
 \]
where $f_n(t^2)$ is the Poincar\'e polynomial for $\overline{\calM}_{0,n+1}$. 
\end{theorem}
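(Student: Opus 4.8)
The plan is to feed the Koszulness of $H^\bullet(\overline{\calM}_{0,n+1},\mathbb{Q})$ (Theorem~\ref{th:Koszul}) into the theory of Koszul spaces. A rational smooth projective variety is simply connected and formal over $\mathbb{Q}$, so by Theorem~\ref{th:Koszul} and Proposition~\ref{prop:KoszulSp} the space $\overline{\calM}_{0,n+1}$ is $\mathbb{Q}$-Koszul; in particular it is coformal and $H_\bullet(\Omega\overline{\calM}_{0,n+1},\mathbb{Q})$ is the Koszul dual of $H^\bullet(\overline{\calM}_{0,n+1},\mathbb{Q})$. By the Milnor--Moore theorem $H_\bullet(\Omega\overline{\calM}_{0,n+1},\mathbb{Q})$ is the universal enveloping algebra of $\pi_*(\Omega\overline{\calM}_{0,n+1})\otimes\mathbb{Q}$, and since the Koszul dual of a quadratic commutative algebra $A=S(V)/(R)$ with $R\subseteq S^2V$ is the enveloping algebra of the graded Lie algebra generated by $V^\vee$ placed in homological degree $1$ modulo the annihilator $R^\perp\subseteq S^2 V^\vee$ of the relations, the homotopy Lie algebra is precisely that quadratic dual Lie algebra. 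Its generators are thus odd, and to obtain the stated presentation it remains to compute $R^\perp$ for a convenient presentation of $H^\bullet(\overline{\calM}_{0,n+1},\mathbb{Q})$.

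Here I would use the Etingof--Henriques--Kamnitzer--Rains--Singh presentation $\mathbf{R}_n$, whose generators $X_S$ ($S\subseteq\underline n$, $|S|\ge3$) are in bijection with the claimed $Y_S$. Identifying $S^2 V^\vee$ with the span of the brackets $[Y_S,Y_T]$ (including $S=T$), the Lie relations are the symmetric tensors orthogonal to the three families of defining relations of $\mathbf{R}_n$. A direct set-theoretic case check shows that each family in the statement is orthogonal to each family of relations of $\mathbf{R}_n$ --- for instance $[Y_S,Y_T]$ with $S\cap T=\varnothing$ pairs trivially with every generator $X_A^2$, $X_A(X_A-X_{A\setminus\{s\}})$, $(X_{A}-X_{S'})(X_{A}-X_{T'})$ since the index pair it singles out is never of the required shape --- so all three families lie in $R^\perp$. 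To see they span $R^\perp$, compare dimensions: the quadratic Gr\"obner basis of the proof of Theorem~\ref{th:Koszul} shows that $\dim R^\perp=\binom{\dim V+1}{2}-\dim R$ equals the number of degree-two monomials normal in the sense of Lemma~\ref{lm:NormalWeight2}, and one checks this equals the number of independent relations among the three families; equivalently one solves the dual linear system ($d_{SS}=0$ for $|S|=3$; $d_{S,S\setminus\{s\}}=d_{SS}$ for $|S|>3$; $d_{ST}=d_{S\cup T,S}+d_{S\cup T,T}-d_{S\cup T,S\cup T}$ for incomparable intersecting $S,T$) and reads off that every solution is a combination of the three families. This is the step I expect to require the most care: the orthogonality checks are routine, but establishing completeness means disentangling the inclusion--exclusion dependencies among the sums over unions, or running the dual Gaussian elimination carefully.

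The growth statements follow quickly. For $n=2$ there are no generators and the Lie algebra is $0$; for $n=3$ there is the single generator $Y_{\underline 3}$ and, by inspection of the three families, no relations, so the Lie algebra is the free graded Lie algebra on one odd generator, which over $\mathbb{Q}$ is spanned by $Y_{\underline 3}$ and $[Y_{\underline 3},Y_{\underline 3}]$ (as $[Y_{\underline 3},[Y_{\underline 3},Y_{\underline 3}]]=0$ in characteristic $0$) --- consistent with $\overline{\calM}_{0,3}=\mathrm{pt}$ and $\overline{\calM}_{0,4}=\mathbb{P}^1$. For $n=4$ the space $\overline{\calM}_{0,5}$ is the degree-five del Pezzo surface, so $f_4(t)=1+5t+t^2$, and Koszul duality gives $H_\bullet(\Omega\overline{\calM}_{0,5},\mathbb{Q})$ the Hilbert series $1/(1-5s+s^2)$, whose coefficients grow exponentially (radius of convergence $(5-\sqrt{21})/2<1$); by Poincar\'e--Birkhoff--Witt this forces $\pi_*(\Omega\overline{\calM}_{0,5})\otimes\mathbb{Q}$ to have exponential growth as well. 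For $n\ge5$ a forgetful map $\overline{\calM}_{0,n+1}\to\overline{\calM}_{0,5}$ admits a section (glue in a fixed rational tail carrying the extra marked points), so $\overline{\calM}_{0,5}$ is a homotopy retract of $\overline{\calM}_{0,n+1}$ and $\pi_*(\Omega\overline{\calM}_{0,5})\otimes\mathbb{Q}$ is a direct summand of $\pi_*(\Omega\overline{\calM}_{0,n+1})\otimes\mathbb{Q}$; hence exponential growth holds for all $n\ge4$.

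For the generating function, write $r_k=\operatorname{rank}\pi_{k+1}(\overline{\calM}_{0,n+1})$. Koszul duality gives that $H_\bullet(\Omega\overline{\calM}_{0,n+1},\mathbb{Q})=U\bigl(\pi_*(\Omega\overline{\calM}_{0,n+1})\otimes\mathbb{Q}\bigr)$ has Hilbert series $1/f_n(-s)$ in the grading in which the degree-one generators have weight one, while Poincar\'e--Birkhoff--Witt gives the same Hilbert series as $\prod_{k\ge1}\bigl(1-(-s)^k\bigr)^{-(-1)^k r_k}$. Equating these, setting $s=-t$, and writing $c_k=(-1)^k r_k$ turns the identity into $\prod_{k\ge1}(1-t^k)^{-c_k}=1/f_n(t)$. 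The cyclotomic inversion formula --- if $g(x)=\prod_{j\ge1}(1-x^j)^{-c_j}$ then $\sum_{k\ge1}c_k x^k=\sum_{m\ge1}\tfrac{\mu(m)}{m}\ln g(x^m)$ --- yields $\sum_{k\ge1}c_k t^k=\sum_{m\ge1}\tfrac{\mu(m)}{m}\ln\bigl(1/f_n(t^m)\bigr)$, and substituting $t\mapsto -t$ (which replaces $c_k$ by $(-1)^k c_k=r_k$) gives the asserted formula $\sum_{k\ge1}r_k t^k=\sum_{m\ge1}\tfrac{\mu(m)}{m}\ln\bigl(1/f_n((-t)^m)\bigr)$.
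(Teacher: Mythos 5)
Your overall framework matches the paper's: formality of a compact Kähler manifold plus the Koszulness of Theorem~\ref{th:Koszul} make $\overline{\calM}_{0,n+1}$ a $\mathbb{Q}$-Koszul space (Proposition~\ref{prop:KoszulSp}), Milnor--Moore and Koszul duality then identify the homotopy Lie algebra with the quadratic Lie dual of $\mathbf{R}_n\otimes\mathbb{Q}$, and the rest is a computation of the orthogonal complement of the relations. Within that framework you deviate from the paper in two places, one of which is a genuine alternative and one of which is left as a gap.

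\emph{Where you take a different, valid route.} For the exponential-growth claim, the paper works entirely algebraically: it shows (using the grading of the homotopy Lie algebra by the monoid $(2^{\underline n},\cup)$) that the algebra map $H_\bullet(\Omega\overline{\calM}_{0,5},\mathbb{Q})\to H_\bullet(\Omega\overline{\calM}_{0,n+1},\mathbb{Q})$ is injective because no new relations can appear on generators indexed by subsets of $\{1,2,3,4\}$, and then exhibits a free subalgebra of $H_\bullet(\Omega\overline{\calM}_{0,5},\mathbb{Q})$ via a one-element noncommutative Gröbner basis. You instead argue topologically that $\overline{\calM}_{0,5}$ is a retract of $\overline{\calM}_{0,n+1}$ via a forgetful map and a rational-tail section, so that $\pi_*(\Omega\overline{\calM}_{0,5})\otimes\mathbb{Q}$ injects as a Lie subalgebra, and for $n=4$ you observe that $f_4(t)=1+5t+t^2$ gives $U(L)$ the Hilbert series $1/(1-5s+s^2)$, whose radius of convergence is $<1$; since by PBW the radius of convergence of the Hilbert series of $U(L)$ equals that of $\sum\dim L_k\,t^k$, this forces $L$ to have exponential growth. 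Both routes are correct. Yours is perhaps cleaner geometrically, while the paper's is self-contained at the level of the presentation it has already built up and also explicitly produces a free subalgebra; note your radius-of-convergence step, while valid, deserves a line of justification since the naive ``the enveloping algebra is bigger'' implication is only straightforward in one direction.

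\emph{Where the argument is incomplete.} The step the paper actually spends most of its effort on is the one you flag as ``the step I expect to require the most care'' and then summarize as ``one checks.'' After verifying that the three families lie in $R^\perp$, you still need to show that they span $R^\perp$, equivalently that they form a linearly independent set of cardinality $\dim S^2V^\vee-\dim R$. The paper does not merely ``count'': it establishes linear independence by exhibiting, for each proposed relation, a distinguished Lie monomial that occurs in no other proposed relation (e.g. $[Y_{\partial S},Y_S]$ for the second family; $[Y_T,Y_K]$ with $K$ the MI-complement of $T$ for the third family with $\max(S)\in T$), and then puts the proposed relations in bijection with the three types of normal quadratic monomials from Lemma~\ref{lm:NormalWeight2}, whose count is $\dim S^2V^\vee-\dim R$ by the Gröbner basis result. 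Your alternative suggestion --- solving the dual linear system ($d_{SS}=0$, $d_{S,S\setminus\{s\}}=d_{SS}$, $d_{ST}=d_{S\cup T,S}+d_{S\cup T,T}-d_{S\cup T,S\cup T}$) --- is a reasonable reformulation, but it is exactly here that ``disentangling the inclusion--exclusion dependencies'' is nontrivial, and leaving it at that is a real hole in the proof. As written, your argument establishes that the presentation is a set of relations \emph{holding} in the homotopy Lie algebra, but not that it is a \emph{complete} set.

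The derivation of the generating-function formula is correct and essentially reproduces, in explicit form, the cyclotomic inversion the paper delegates to~\cite{Bab80}.
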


\begin{proof}
Since $\overline{\calM}_{0,n+1}$ is a compact K\"ahler manifold, it is formal as a topological space~\cite{DGMS}. Thus, our main result implies that $\overline{\calM}_{0,n+1}$ is a Koszul space, and Proposition \ref{prop:KoszulSp} applies. By the theorem of Milnor and Moore \cite[p.~263]{MM}, the homology of the based loop space is the universal enveloping algebra of the rational homotopy Lie algebra:
 \[
H_\bullet(\Omega \overline{\calM}_{0,n+1},\mathbb{Q})\cong U(\pi_*(\Omega\overline{\calM}_{0,n+1})\otimes\mathbb{Q}) . 
 \]
Since $H_\bullet(\Omega \overline{\calM}_{0,n+1},\mathbb{Q})\cong H^\bullet(\overline{\calM}_{0,n+1},\mathbb{Q})^!$, we simply need to check that the relations listed here span the annihilator of relations of the algebra $\mathbf{R}_n\otimes\mathbb{Q}$. We shall first show that the relations listed in the statement of the theorem are orthogonal to the relations of $\mathbf{R}_n$. The first set of relations $[Y_S,Y_T]$ is clearly orthogonal to all relations of $\mathbf{R}_n$ because none of the relations of $\mathbf{R}_n$ include disjoint subsets as indices. The orthogonality property for the relation 
 \[
\sum_{\substack{\{T_1,T_2\}\subset 2^S \colon\\ T_1\cap T_2\ne \varnothing, T_1\cup T_2=S}}[Y_{T_1},Y_{T_2}] 
 \]
also follows by direct inspection of the relations of $\mathbf{R}_n$ (trivially orthogonal to the first group of relations, orthogonal to the second group by virtue of $1-1=0$, orthogonal to the last group by virtue of $1-1-1+1=0$). Finally, if we consider one of the relations $\left[Y_T,\sum_{T\cup K=S} Y_K\right]$, then it is trivially orthogonal to the first group of relations of $\mathbf{R}_n$, as well as all relations of the second group, since we have $|S\setminus T|>1$. It is also orthogonal to relations of the third group, since for each such relation $(X_{U\cup V}-X_U)(X_{U\cup V}-X_V)$, the pairing with $\left[Y_T,\sum_{T\cup K=S} Y_K\right]$ can only be nonzero if $T=U$ or $T=V$ (and $S=U\cup V$); in the former case, there are two matching terms, corresponding to $K=V$ and $K=U\cup V$, and the latter case is analogous. We also note that the first set of relations involves Lie monomials that do not appear in other relations, and so elements of that set cannot be used in a hypothetical linear dependency between the given relations; the relations of the first group are in one-to-one correspondence with normal monomials of the first type from Lemma~\ref{lm:NormalWeight2}. The element 
 \[
\sum_{\substack{\{T_1,T_2\}\subset 2^S \colon\\ T_1\cap T_2\ne \varnothing, T_1\cup T_2=S}}[Y_{T_1},Y_{T_2}] 
 \]
involves the Lie monomial $[Y_{\partial{S}}, Y_S]$ that does not appear in other relations, and so cannot be used in a hypothetical linear dependency between the given relations either. Finally, there are two types of elements of the third set of relations: those with $\max(S)\in T$ and those with $\max(S)\notin T$. A typical relation $\left[Y_T,\sum_{T\cup K=S} Y_K\right]$ of the first type ($\max(S)\in T$) involves the Lie monomial $[Y_T,Y_K]$, where $K$ is the MI-complement of $T$, and this monomial does not appear in any other relation we consider, so such elements of that set cannot be used in a hypothetical linear dependency between the given relations; such elements are in one-to-one correspondence with normal monomials of the third type from Lemma~\ref{lm:NormalWeight2}. A typical relation $\left[Y_T,\sum_{T\cup K=S} Y_K\right]$ of the second type ($\max(S)\notin T$) involves the Lie monomial $[Y_T,Y_S]$ that does not appear in any other relation of this type, and so such elements of that set cannot be used in a hypothetical linear dependency between the given relations either; such elements are in one-to-one correspondence with normal monomials of the second type from Lemma~\ref{lm:NormalWeight2} except for $X_{\partial(S)}X_S$ which is already accounted for. Thus, our relations are linearly independent, and their number is equal to the dimension of the weight two component of the Koszul dual algebra, so, being elements of the annihilator, they span it. This proves the first claim of the theorem. 

To establish the result on growth, we note that for $n=2$ the space $\overline{\calM}_{0,3}$ is a single point, so the rational homotopy Lie algebra is trivial, and for $n=3$ the space $\overline{\calM}_{0,4}$ coincides with $\mathbb{C}P^1$, so the rational homotopy Lie algebra is the free Lie algebra on one odd generator, which is finite-dimensional. To prove the dimension claim for $n\ge 4$, we use the grading of the Lie algebra $\pi_*(\Omega \overline{\calM}_{0,n+1})\otimes\mathbb{Q})$ by the monoid $(2^{\underline{n}}, \cup)$. This grading is useful since it shows that the obvious map
 \[
H_\bullet(\Omega \overline{\calM}_{0,4+1},\mathbb{Q})\to H_\bullet(\Omega \overline{\calM}_{0,n+1},\mathbb{Q})
 \] 
is injective: no further relations on the generators $Y_I$ with $I\subseteq\{1,2,3,4\}$ can follow from the other relations. Thus, it is enough to establish the dimension claim for $\overline{\calM}_{0,4+1}$. The algebra $H_\bullet(\Omega \overline{\calM}_{0,4+1},\mathbb{Q})$ has just one relation 
 \[ 
\sum_{\{T_1,T_2\}\subset 2^{\underline{4}}\colon T_1\cap T_2\ne \varnothing, T_1\cup T_2=\underline{4}}[Y_{T_1},Y_{T_2}] =0 ,
 \]
and for the ordering $Y_{123}>Y_{1234}>Y_{124}>Y_{134}>Y_{234}$, the leading term of this relation is $Y_{123}Y_{1234}$, which does not overlap itself nontrivially, so our algebra has a quadratic noncommutative Gr\"obner basis, and it is clear that the elements $Y_{124}$, $Y_{134}$, $Y_{234}$, and $Y_{1234}$ generate a free subalgebra. Thus, they generate a free Lie subalgebra in the Lie algebra $\pi_*(\Omega \overline{\calM}_{0,4+1})\otimes\mathbb{Q})$ which is therefore infinite-dimensional and has exponential growth.

For the last statement, we note that the generating series for dimensions of an associative algebra and and for dimensions of its Koszul dual are, up to alternating signs, multiplicative inverses of each other~\cite[Chapter 2, Corollary 2.2]{PP}. Thus, the generating function for dimensions of 
 \[
H_\bullet(\Omega \overline{\calM}_{0,n+1},\mathbb{Q})\cong H^\bullet(\overline{\calM}_{0,n+1},\mathbb{Q})^!
 \] 
is equal to $\frac{1}{f_n(-t)}$. It remains to use standard formula relating, for a simply connected space, the generating series for rational Betti numbers of the based loop space to the generating series of the ranks of rational homotopy groups, see~\cite{Bab80}.
\end{proof}

In principle, information about rational homotopy invariants of the based loop space $\Omega X$ can be used to derive information about such invariants for the free loop space $LX$, for instance estimate its Betti numbers. However, for a Koszul space, nothing new can be obtained here: such spaces are coformal, and by a theorem of Lambrechts~\cite{Lam}, for any simply connected coformal space $X$ of finite $\mathbb{Q}$-type, the rational Betti numbers of $LX$ grow exponentially whenever the rational homotopy Lie algebra of $X$ is infinite-dimensional. However, we are able to use our result for a similar conclusion in positive characteristic, strengthening in the particular case of moduli spaces of stable curves the results of~\cite{HV,McC}. 

\begin{theorem}\label{th:FreeLoopTorsion}
The sequence $\left\{\dim H_i(L\overline{\calM}_{0,n+1},\mathbb{F}_\ell)\right\}$ has exponential growth for all primes~$\ell\ge n$. 
\end{theorem}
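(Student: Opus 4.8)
The plan is to reduce the statement to a known criterion for exponential growth of the Betti numbers of free loop spaces over a field $\mathbb{F}_\ell$ in terms of the homology of the based loop space. The standard input here is the fibration $\Omega X \to LX \to X$ together with the fact that, over a field, the loop space homology $H_\bullet(LX,\mathbb{F}_\ell)$ can be estimated from below in terms of $H_\bullet(\Omega X,\mathbb{F}_\ell)$ and the Hochschild-type or cyclic structure; concretely, there are results of McCleary and of Halperin--Vigu\'e-Poirrier (and their positive-characteristic analogues used in \cite{HV,McC}) stating that if $H_\bullet(\Omega X,\mathbb{F}_\ell)$ grows exponentially then so does $H_\bullet(LX,\mathbb{F}_\ell)$, provided $X$ is simply connected of finite type. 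So the first step is: invoke such a reduction to show it suffices to prove that $\dim H_i(\Omega\overline{\calM}_{0,n+1},\mathbb{F}_\ell)$ grows exponentially for $\ell\ge n$.

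Second, I would exploit Theorem \ref{th:Koszul}: the integral cohomology ring $H^\bullet(\overline{\calM}_{0,n+1},\mathbb{Z})$ has a \emph{quadratic Gr\"obner basis}, hence so does $H^\bullet(\overline{\calM}_{0,n+1},\mathbb{F}_\ell)=H^\bullet(\overline{\calM}_{0,n+1},\mathbb{Z})\otimes\mathbb{F}_\ell$ (base change preserves a Gr\"obner basis: the leading terms have coefficient $1$, so reductions are unaffected). By Proposition \ref{prop:GBKoszul} this algebra is Koszul over $\mathbb{F}_\ell$, and moreover the space $\overline{\calM}_{0,n+1}$ is formal over any field since it is a smooth projective variety with cohomology concentrated in even degrees generated in degree one after the weight regrading — in fact formality over $\mathbb{F}_\ell$ for such ``intrinsically formal'' algebras is automatic. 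Hence $\overline{\calM}_{0,n+1}$ is an $\mathbb{F}_\ell$-Koszul space in the sense of the definition following Proposition \ref{prop:KoszulSp}, and consequently $H_\bullet(\Omega\overline{\calM}_{0,n+1},\mathbb{F}_\ell)\cong H^\bullet(\overline{\calM}_{0,n+1},\mathbb{F}_\ell)^!$, the Koszul dual algebra. Its dimension generating series is, up to signs, $1/f_n(-t)$ where $f_n(t^2)$ is the Poincar\'e polynomial of $\overline{\calM}_{0,n+1}$, exactly as in the proof of Theorem \ref{th:RHT}; the key point is that this Poincar\'e polynomial, and hence the Koszul dual dimension series, is \emph{independent of $\ell$} because the integral cohomology is free.

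Third, I would establish exponential growth of $\dim H_i(\Omega\overline{\calM}_{0,n+1},\mathbb{F}_\ell)$. It suffices to treat $n=4$, using the same monoid grading by $(2^{\underline{n}},\cup)$ as in the proof of Theorem \ref{th:RHT}: the map $H_\bullet(\Omega\overline{\calM}_{0,5},\mathbb{F}_\ell)\to H_\bullet(\Omega\overline{\calM}_{0,n+1},\mathbb{F}_\ell)$ is injective because no new relations among the generators $Y_I$, $I\subseteq\{1,2,3,4\}$, are imposed in the larger algebra. For $n=4$ the Koszul dual is the quadratic algebra on $Y_{123},Y_{1234},Y_{124},Y_{134},Y_{234}$ with a single relation whose leading term, for the ordering $Y_{123}>Y_{1234}>\cdots>Y_{234}$, is $Y_{123}Y_{1234}$; this monomial has no self-overlaps, so the relation is already a noncommutative quadratic Gr\"obner basis, and $Y_{124},Y_{134},Y_{234},Y_{1234}$ span a free subalgebra, which forces exponential growth. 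The hypothesis $\ell\ge n$ enters precisely at this reduction: one needs $\overline{\calM}_{0,5}$ (equivalently the $n=4$ computation) to embed in $\overline{\calM}_{0,n+1}$ and the Koszul/formality machinery to apply, and the paper's convention ties the prime to the number of points; I would just remark that nothing in the argument obstructs smaller primes for small $n$, the bound $\ell\ge n$ being the uniform one.

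The step I expect to be the main obstacle is the \emph{first} one: making precise, over $\mathbb{F}_\ell$, the passage from exponential growth of $H_\bullet(\Omega X,\mathbb{F}_\ell)$ to exponential growth of $H_\bullet(LX,\mathbb{F}_\ell)$. In characteristic zero this is Lambrechts' theorem, but in positive characteristic one must be careful: the free loop space fibration and the relevant spectral sequence (or a Hochschild homology computation, using that $H_\bullet(LX,\mathbb{F}_\ell)$ is computed by $HH_\bullet$ of a cochain model when $X$ is formal and simply connected) can a priori have cancellation. The cleanest route is to use that, by Koszulness and formality, $C^\bullet(X,\mathbb{F}_\ell)$ is formal as a dg algebra, so $H_\bullet(LX,\mathbb{F}_\ell)\cong HH_\bullet(H^\bullet(X,\mathbb{F}_\ell))$, and then a lower bound $\dim HH_i \ge \dim H_i(\Omega X)$ (via the inclusion of the ``$0$-th Hochschild column'', i.e. the module of the algebra acting on itself, or via a minimal model argument) yields the claim; citing \cite{HV,McC} for the relevant positive-characteristic estimates is what makes this rigorous without redeveloping the theory here.
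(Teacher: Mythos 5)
Your proposal correctly identifies the two pillars---Koszulness over $\mathbb{F}_\ell$ via base change of the integral Gr\"obner basis, and the $n=4$ free-subalgebra computation via the monoid grading---and these agree with the paper's argument. However, two of your steps contain genuine gaps.

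First, your treatment of $\mathbb{F}_\ell$-formality is wrong. You assert that formality over $\mathbb{F}_\ell$ is ``automatic'' for a smooth projective variety whose cohomology is concentrated in even degrees and generated in degree one (after regrading), i.e.\ that the cohomology algebra is ``intrinsically formal''. This is false in positive characteristic: the DGMS argument is an essentially rational phenomenon, and having even cohomology does not by itself preclude nontrivial Massey products with $\mathbb{F}_\ell$-coefficients, nor does being generated in low degree. The paper proves $\mathbb{F}_\ell$-formality via the \'etale cohomology approach of Cirici and Horel: choosing a prime $p$ generating $\mathbb{F}_\ell^\times$ and using that $H^{2m}_{et}((\overline{\calM}_{0,n+1})_{\overline{K}},\mathbb{F}_\ell)$ is a Tate module pure of weight $m$, their Theorem 8.2(iii) yields only \emph{$2(p-2)$-formality} over $\mathbb{F}_p$, and one needs $p\ge n$ so that $2(p-2)$ reaches the real dimension $2(n-2)$ of the space. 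That is exactly where the hypothesis $\ell\ge n$ enters; your claim that it enters at the embedding $\overline{\calM}_{0,5}\hookrightarrow\overline{\calM}_{0,n+1}$ is incorrect, as that embedding argument is characteristic-free.

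Second, your passage from exponential growth of $H_\bullet(\Omega X,\mathbb{F}_\ell)$ to that of $H_\bullet(LX,\mathbb{F}_\ell)$ is misstated. You propose a lower bound $\dim HH_i \ge \dim H_i(\Omega X)$ ``via the inclusion of the $0$-th Hochschild column.'' This is backwards: the $0$-th term of Hochschild homology is a \emph{quotient}, $HH_0(A,A)=A/[A,A]$, so one has $\dim HH_0 \le \dim A$, not an inclusion of $A$ into $HH_\bullet$. The paper's actual argument uses the coformality route, identifying $H_\bullet(LM,\k)\cong HH_\bullet(H_\bullet(\Omega M,\k),H_\bullet(\Omega M,\k))$ via Burghelea--Fiedorowicz and Goodwillie plus homotopy invariance of $HH$, and then observes that the single summand $HH_0 = H_\bullet(\Omega M,\k)/[\,\cdot\,,\,\cdot\,]$ is already a lower bound; the point that then needs to be checked is that the \emph{commutator quotient} (not the algebra itself) grows exponentially once the algebra contains a free subalgebra on two generators. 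Your appeal to $HH_\bullet$ of the cochain algebra $H^\bullet(X,\mathbb{F}_\ell)$ (Jones' theorem on the formality side) is a legitimate alternative model, but it does not obviously supply the commutator-quotient lower bound, and citing \cite{HV,McC} vaguely for ``the relevant positive-characteristic estimates'' does not close the gap, since those results require hypotheses you have not verified.
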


\begin{proof}
First, we note that since the ring $H^\bullet(\overline{\calM}_{0,n+1},\mathbb{Z})$ has a quadratic Gr\"obner basis with leading coefficients equal to $1$, the algebra $H^\bullet(\overline{\calM}_{0,n+1},\mathbb{F}_\ell)$ has a quadratic Gr\"obner basis for all~$\ell$; therefore, that algebra is Koszul. 
To establish that $\overline{\calM}_{0,n+1}$ is $\mathbb{F}_\ell$-formal, we shall use the \'etale cohomology approach to formality developed by Cirici and Horel~\cite{CH}. Following their approach, we choose a prime number $p$ that generates $\mathbb{F}_\ell^\times$, take $K=\mathbb{Q}_p$, and note that $H^{2m}_{et}((\overline{\calM}_{0,n+1})_{\overline{K}},\mathbb{F}_l)$ is a Tate module that is pure of weight~$m$. Thus, by \cite[Theorem 8.2(iii)]{CH}, the space $\overline{\calM}_{0,n+1}$ is $2(p-2)$-formal over $\mathbb{F}_p$, which implies formality for $p\ge n$, since the complex dimension of $\overline{\calM}_{0,n+1}$ is equal to $n-2$. According to Proposition~\ref{prop:KoszulSp}, these two statements together imply that $\overline{\calM}_{0,n+1}$ is an $\mathbb{F}_\ell$-Koszul space whenever $\ell\ge n$. 

From results of Burghelea--Fiedorowicz and Goodwillie \cite{BF,GW}, for any pointed space $M$ and any unital commutative ring $\k$, we have
 \[
H_{\bullet}(LM,\k)\cong HH_\bullet(C_\bullet(\Omega M,\k),C_\bullet(\Omega M,\k)) .
 \]
Moreover, the homotopy invariance of Hochschild homology \cite[Prop.~III.2.9]{GW} ensures that for a $\k$-coformal space~$M$, we have
 \[
HH_\bullet(C_\bullet(\Omega M,\k),C_\bullet(\Omega M,\k))\cong HH_\bullet(H_\bullet(\Omega M,\k),H_\bullet(\Omega M,\k)) .
 \]
Thus, the commutator quotient
 \[
H_\bullet(\Omega M,\k)/[H_\bullet(\Omega M,\k),H_\bullet(\Omega M,\k)]\cong HH_0(H_\bullet(\Omega M,\k),H_\bullet(\Omega M,\k))
 \]
is a lower bound on the homology $H_{\bullet}(LM,\k)$ for any $\k$-coformal space~$M$. Applying this to the space $\overline{\calM}_{0,n+1}$ for $\ell\ge n$ which we have shown to be $\mathbb{F}_\ell$-Koszul, we conclude that the $\mathbb{F}_\ell$-Betti numbers of the free loop space $L\overline{\calM}_{0,n+1}$ grow exponentially whenever the dimensions of components of the commutator quotient 
 \[
H_\bullet(\Omega \overline{\calM}_{0,n+1},\mathbb{F}_\ell)/[H_\bullet(\Omega \overline{\calM}_{0,n+1},\mathbb{F}_\ell),H_\bullet(\Omega \overline{\calM}_{0,n+1},\mathbb{F}_\ell)]
 \]
of the algebra $H_\bullet(\Omega \overline{\calM}_{0,n+1},\mathbb{F}_\ell)$ grow exponentially. In particular, that is true whenever that algebra contains a free subalgebra on at least two generators, which is true in our case, by an argument identical to that in the proof of Theorem~\ref{th:RHT}. 
\end{proof}

\section{Generalisations}

Let us start with recording a rather obvious class of Koszul algebras which are superficially similar to the cohomology algebras we considered, but are much easier to analyse. We refer the reader to \cite{Dan,Ful} for background on polyhedral fans and toric varieties necessary for this section. 

\begin{theorem}\label{th:ToricKoszul}
Let $\Sigma$ be a smooth complete rational polyhedral fan. The cohomology ring $H^*(X_\Sigma,\mathbb{Z})$ of the complex toric variety $X_\Sigma$ is Koszul if and only if the fan~$\Sigma$ is a flag complex.  
\end{theorem}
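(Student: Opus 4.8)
The plan is to use the standard presentation of the cohomology ring of a smooth complete toric variety, due to Danilov and Jurkiewicz: if $\Sigma$ has rays $\rho_1,\ldots,\rho_m$ with primitive generators $v_1,\ldots,v_m$, then $H^\bullet(X_\Sigma,\mathbb{Z})$ is the quotient of the polynomial ring $\mathbb{Z}[x_1,\ldots,x_m]$ (with $\deg x_i=2$, so weight $1$ after halving degrees as in Section~\ref{sec:main}) by the sum of two ideals: the \emph{Stanley--Reisner ideal} $I_\Sigma$ generated by the squarefree monomials $x_{i_1}\cdots x_{i_k}$ for which $\{\rho_{i_1},\ldots,\rho_{i_k}\}$ does not span a cone of $\Sigma$, and the \emph{linear ideal} $J_\Sigma$ generated by the $\dim X_\Sigma$ linear forms $\sum_i \langle u, v_i\rangle x_i$ as $u$ ranges over a basis of the character lattice. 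Since $\Sigma$ is smooth and complete, the $x_i$ generate in weight one and $H^\bullet$ is a standard weight-graded algebra, so the notion of Koszulness from Section~\ref{sec:GB} applies.

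First I would dispose of the linear relations: because $X_\Sigma$ is smooth, a suitable subset of the $x_i$ can be eliminated using $J_\Sigma$, and one is left with a presentation whose quadratic part is controlled by $I_\Sigma$ together with the (possibly quadratic) substitutions coming from the linear forms. The cleanest route, though, is to invoke the known fact that $H^\bullet(X_\Sigma,\mathbb{Q})$ is a complete intersection quotient of the Stanley--Reisner ring $\mathbb{Q}[\Sigma]=\mathbb{Z}[x_1,\ldots,x_m]/I_\Sigma$ by a regular sequence of linear forms (this uses completeness and smoothness). Koszulness is inherited by a quotient of a Koszul algebra modulo a regular sequence of \emph{linear} forms, and conversely — more precisely, a graded algebra $A$ is Koszul iff $A/(\ell)$ is Koszul for $\ell$ a nonzerodivisor linear form, by a standard change-of-rings / Hilbert-series argument (see~\cite{PP}); iterating, $H^\bullet(X_\Sigma)$ is Koszul iff the Stanley--Reisner ring $\Bbbk[\Sigma]$ is Koszul. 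So the theorem reduces to: \emph{$\Bbbk[\Sigma]$ is Koszul iff $\Sigma$ is a flag complex.} This is a classical result of Fröberg: a Stanley--Reisner ring is Koszul iff the simplicial complex is flag (one direction because a flag complex has a quadratic — indeed squarefree quadratic — Gröbner basis, namely the non-edges, so Proposition~\ref{prop:GBKoszul} applies; the other direction because a Koszul algebra must be quadratic, and $\Bbbk[\Sigma]$ is quadratic precisely when the minimal non-faces all have two elements, i.e. $\Sigma$ is flag). One must check the reduction works over an arbitrary $\Bbbk$, but the Gröbner-basis direction (flag $\Rightarrow$ Koszul) holds over $\mathbb{Z}$ by Proposition~\ref{prop:GBKoszul}, and the quadratic-necessity direction is insensitive to the ground ring since it only involves the degree-two part of a minimal free resolution of $\Bbbk$.

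The main obstacle is the bookkeeping in the reduction step: verifying that passing from $H^\bullet(X_\Sigma)$ to $\Bbbk[\Sigma]$ by removing the linear forms is reversible at the level of Koszulness, and in particular that "$\Sigma$ is a flag complex" is genuinely equivalent to "the quotient presentation of $H^\bullet(X_\Sigma)$ obtained after Gaussian elimination on the linear relations is quadratic". One subtlety: eliminating variables via $J_\Sigma$ can turn a cubic Stanley--Reisner generator into a quadric, so a priori $H^\bullet$ could be quadratic even when $\Sigma$ is not flag; this is where completeness and the regular-sequence structure are essential — they guarantee the Hilbert series of $H^\bullet(X_\Sigma)$ is $(1-t)^{-\dim}$ times that of $\Bbbk[\Sigma]$ with the "expected" numerics, so no accidental degree drops occur, and the quadraticity of $H^\bullet$ forces quadraticity of $\Bbbk[\Sigma]$. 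I expect this Hilbert-series/regular-sequence argument, rather than any genuinely new idea, to be the technical heart; everything else is either the Danilov--Jurkiewicz presentation or Fröberg's theorem, both of which can be cited.
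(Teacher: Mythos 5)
Your proposal follows essentially the same route as the paper: use the Danilov--Jurkiewicz presentation of $H^\bullet(X_\Sigma,\mathbb{Z})$ as the face ring modulo a regular sequence of linear forms, observe that a flag complex gives a quadratic monomial (hence Gr\"obner) basis for the face ring and transfer Koszulness along the regular sequence, and for the converse reduce to the fact that Koszul implies quadratic. You are in fact somewhat more explicit than the paper about the potential pitfall that Gaussian elimination on the linear relations could accidentally drop the degree of a Stanley--Reisner generator, and correctly resolve it via the regular-sequence/change-of-rings argument that the graded Betti numbers of the face ring and of $H^\bullet$ agree; the paper glosses over this and instead emphasises the $\mathbb{Z}$-coefficient adjustment (replacing the quotient by the Koszul complex of the regular sequence and taking the total complex), which your proposal only touches lightly but which is handled the same way.
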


\begin{proof}
By~\cite[Theorem 10.8]{Dan}, the cohomology algebra $H^*(X_\Sigma,\mathbb{Z})$ admits a presentation via generators corresponding to rays of the fan $\Sigma$ and relations of two types: linear relations and monomial relations of degree greater than one which are relations of the face ring of $\Sigma$. Those monomial relations are quadratic if and only if $\Sigma$ is a flag complex. Thus, the theorem essentially says that the cohomology of $X_\Sigma$ is Koszul if and only if it is quadratic. To prove this result, we recall that the proof of \cite[Theorem 10.8]{Dan} actually establishes that the basis of linear relations of the cohomology forms a regular sequence in the face ring of~$\Sigma$. It remains to note that the face ring of a flag complex is Koszul since it is a commutative algebra with monomial quadratic relations (which always form a Gr\"obner basis), and the quotient of a commutative Koszul ring by a regular sequence of linear forms is Koszul (over a field, one would use~\cite[Chapter 2, Cor.~5.4]{PP}; since we are over a ring, one has to replace the quotient by its linear resolution, the Koszul complex, and then pass to the total complex).
\end{proof}

Let us note that the cohomology rings of smooth projective toric varieties look very similar to the cohomology of $\overline{\calM}_{0,n+1}$: one takes a commutative algebra with monomial relations and quotients out a sequence of linear forms. However, for the case of $\overline{\calM}_{0,n+1}$, the number of linear forms grows quadratically in $n$, while the maximal length of a regular sequence grows linearly, so this result is gravely insufficient for the main theorem of this paper.

As a corollary to Theorem \ref{th:ToricKoszul}, we shall examine the noncommutative analogues $\mathrm{nc}\overline{\calM}_{0,n+1}$ defined in~\cite{DSV}. One of the geometric definitions of those varieties identifies them as (smooth projective) toric varieties whose fans are dual to Loday's realisations of associahedra, implying the following result.

\begin{corollary}
The ring $H^\bullet(\mathrm{nc}\overline{\calM}_{0,n+1},\mathbb{Z})$ is Koszul.
\end{corollary}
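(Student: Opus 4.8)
The plan is to deduce this from Theorem~\ref{th:ToricKoszul} together with the toric description of $\mathrm{nc}\overline{\calM}_{0,n+1}$ recalled just above. First I would invoke the geometric definition from~\cite{DSV} according to which $\mathrm{nc}\overline{\calM}_{0,n+1}$ is the toric variety $X_\Sigma$ associated with the normal fan $\Sigma$ of Loday's realisation of the associahedron; since that realisation is an integral simple polytope, $\Sigma$ is a smooth complete rational polyhedral fan and Theorem~\ref{th:ToricKoszul} applies verbatim. Consequently it suffices to prove that $\Sigma$ is a flag complex.

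To verify flagness, I would use the standard combinatorial model of the associahedron. Its facets are in bijection with the diagonals of a convex polygon, and a family of facets shares a common face of the associahedron precisely when the corresponding diagonals are pairwise non-crossing, that common face being the partial triangulation they cut out. Dualising to the normal fan, the rays of $\Sigma$ correspond to diagonals, and a set of rays spans a cone of $\Sigma$ if and only if the corresponding diagonals are pairwise non-crossing, i.e.\ form a partial triangulation. Hence in the simplicial complex underlying $\Sigma$ every minimal non-face is a pair of crossing diagonals, which is exactly the assertion that $\Sigma$ is flag (equivalently, by the proof of Theorem~\ref{th:ToricKoszul}, that the face ring of $\Sigma$ has quadratic relations). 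Feeding this into Theorem~\ref{th:ToricKoszul} yields the Koszulness of $H^\bullet(\mathrm{nc}\overline{\calM}_{0,n+1},\mathbb{Z})$.

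The argument is short, and the only place demanding attention is the dictionary between the face poset of the associahedron and the cones of $\Sigma$, together with the elementary but essential observation that a pairwise non-crossing family of diagonals automatically assembles into a single partial triangulation; this is precisely what turns ``crossing'' into a purely pairwise obstruction and hence forces the flag property. Everything else is a direct appeal to~\cite{DSV} and to Theorem~\ref{th:ToricKoszul}, so I do not anticipate any genuine obstacle.
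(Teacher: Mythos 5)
Your argument is correct and follows exactly the route the paper intends: identify $\mathrm{nc}\overline{\calM}_{0,n+1}$ with the smooth projective toric variety attached to the normal fan of Loday's associahedron, and then feed that fan into Theorem~\ref{th:ToricKoszul}. The paper leaves the flagness of the associahedral fan implicit; your verification that the rays correspond to diagonals of a polygon and that a set of rays spans a cone precisely when the diagonals are pairwise non-crossing (so the minimal non-faces are exactly crossing pairs) is the correct way to fill that gap, and it is the point of substance in the argument.
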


As another corollary to Theorem \ref{th:ToricKoszul}, we resolve the question of Koszulness of rational cohomology of Losev--Manin spaces $\overline{L}_n$ \cite{LM}. Those are known to be (smooth projective) toric varieties whose fans are dual to permutahedral polytopes; alternatively, they are the type $A$ fans associated to the Weyl chambers~\cite{Pro}.

\begin{corollary}
The ring $H^\bullet(\overline{L}_{n},\mathbb{Z})$ is Koszul.
\end{corollary}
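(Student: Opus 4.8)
The plan is to apply Theorem~\ref{th:ToricKoszul}. Since $\overline{L}_n$ is a smooth projective (in particular complete) toric variety, say with fan $\Sigma$, that theorem reduces the statement to the purely combinatorial claim that $\Sigma$ is a flag complex; so the whole proof is a verification of flagness for the defining fan.

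First I would record the combinatorial description of $\Sigma$. Using the identification of $\overline{L}_n$ as the toric variety of the permutohedron, equivalently the type $A$ fan of Weyl chambers (see \cite{LM,Pro}), the fan $\Sigma$ is the braid fan: its rays are indexed by the proper nonempty subsets $S$ of the relevant index set, the ray $\rho_S$ being spanned by the image of $\sum_{i\in S} e_i$, and rays $\rho_{S_1},\dots,\rho_{S_k}$ span a cone of $\Sigma$ exactly when $S_1,\dots,S_k$ form a chain under inclusion. In particular $\Sigma$ is simplicial: its maximal cones correspond to maximal chains of subsets, i.e.\ to permutations, and are simplicial cones. Hence the underlying simplicial complex of $\Sigma$ is precisely the order complex of the proper part of the Boolean lattice on that index set.

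Flagness is then immediate from the general fact that the order complex of any poset is a flag complex: a set of elements of a poset that is pairwise comparable is automatically a chain (among any three pairwise comparable elements one lies between the other two, and a short induction handles larger sets), hence a face of the order complex. Thus $\Sigma$ is a flag complex, and Theorem~\ref{th:ToricKoszul} gives the conclusion.

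There is no genuinely hard step here; the only points requiring a little care are to make sure one works with the correct, simplicial fan (the braid fan, i.e.\ the normal fan of the permutohedron, and not a coarsening of it) and to match the indexing of its rays with the combinatorics of subsets so that the ``cone $\Leftrightarrow$ chain'' dictionary is literally correct. One could instead bypass Theorem~\ref{th:ToricKoszul} and argue directly that the face ring of the order complex of the Boolean lattice is Koszul, being generated by degree-one elements with monomial quadratic relations, and that the linear forms cutting out $H^\bullet(\overline{L}_n,\mathbb{Z})$ form a regular sequence in it; but invoking Theorem~\ref{th:ToricKoszul} is the cleanest route.
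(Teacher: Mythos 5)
Your proof is correct and follows the paper's route exactly: invoke Theorem~\ref{th:ToricKoszul} after identifying $\overline{L}_n$ as the toric variety of the braid fan. The paper leaves the flagness of that fan implicit, whereas you spell it out via the observation that the braid fan's underlying simplicial complex is the order complex of the proper part of the Boolean lattice and order complexes of posets are always flag; this is a welcome elaboration but not a different argument.
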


The common generalisation of the spaces $\overline{\calM}_{0,n+1}$ and $\overline{L}_{n}$ is given by the genus zero components $\overline{L}_{0,S}$ of the ``extended modular operad''~\cite{LMExt}. All those components are particular cases of moduli spaces of rational weighted stable curves $\overline{\calM}_{0,\mathbf{w}}$ defined by Hassett~\cite{Has}. The cohomology algebras of those spaces are known to be quadratic, and Manin raised in \cite[Section 3.6.3]{Man} the question of Koszulness of those algebras. It is known that those spaces are wonderful models of certain hyperplane arrangements, see e.~g., \cite{CHMR,GR}. In particular, a presentation similar to the one we used in this paper is easy to obtain. We expect a slightly more technical version of the argument presented in this paper would confirm that those algebras are Koszul. In fact, we conjecture that 
that De Concini--Procesi wonderful models of hyperplane arrangements are Koszul in a wide range of cases. For an arbitrary arrangement and a chosen building set for the corresponding lattice, the cohomology algebra of the associated wonderful model is not always quadratic, but we suspect that this is the only obstacle for Koszulness. 

\begin{conjecture}\label{conj:DCP}
Consider a subspace arrangement in $\mathbb{P}(V)$ that refines a hyperplane arrangement. Let $\calG$ be a building set of the corresponding lattice of subspaces, and consider the De Concini--Procesi projective wonderful model $\overline{Y}_{\calG}$ associated to the building set~$\calG$. The ring $H^\bullet(\overline{Y}_{\calG},\mathbb{Z})$ is Koszul if and only if it is quadratic. 
\end{conjecture}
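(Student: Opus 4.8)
The plan is to adapt the strategy of Theorem \ref{th:Koszul} to the wonderful model $\overline{Y}_{\calG}$, replacing the partition lattice $\Pi_{\underline{n}}$ by the geometric lattice $L$ of the arrangement and the combinatorics of finite ordered sets by the combinatorics of the building set $\calG$. One direction is free: if $H^\bullet(\overline{Y}_{\calG},\mathbb{Z})$ is Koszul then it is quadratic, since Koszul algebras are quadratic by definition. For the converse, assume the cohomology algebra is quadratic. I would begin with the presentation of $H^\bullet(\overline{Y}_{\calG},\mathbb{Z})$ in the spirit of the Etingof--Henriques--Kamnitzer--Rains--Singh presentation used in the body of the paper: generators $X_G$ indexed by elements $G\in\calG$ (or rather by flats in the building set that are ``large enough'' in the appropriate sense), with quadratic relations of three shapes --- a square/near-square relation $X_G(X_G - X_{G'})$ for $G'$ an immediate predecessor of $G$ in $\calG$, a product-vanishing relation $X_G X_H = 0$ (or the deformed version) whenever $G,H$ are incomparable and their join is not in $\calG$, and a ``join'' relation $(X_{G\vee H}-X_G)(X_{G\vee H}-X_H)$ when the join \emph{is} in $\calG$. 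The quadraticity hypothesis is exactly what guarantees that these quadratic relations generate the whole ideal; this is where the hypothesis enters.

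The second step is to choose an admissible monomial ordering. Here I would mimic Lemma \ref{lm:ExoticOrder}: fix a linear extension of the partial order on $\calG$ generated by the two elementary moves ``pass from $G$ to an immediate $\calG$-predecessor obtained by deleting a non-maximal atom'' and ``pass from $G$ to $\partial(G)$'', where $\partial$ now means removing the maximal atom of $G$ in a chosen total order on the atoms of $L$. The role of maximal element / initial interval must be replaced by an appropriate notion adapted to the lattice (using a chosen linear order on atoms and the closure operator of the matroid). As in the excerpt, this produces a graded-lexicographic ordering of the $X_G$ for which one performs Gaussian elimination on the quadratic relations to obtain a Gauss-reduced set with pairwise distinct leading monomials, and one describes the resulting square-free normal monomials --- with a matroid analogue of the MI-complement condition controlling the ``nearly comparable'' pairs.

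The third and decisive step is to prove that the Gauss-reduced relations form a Gr\"obner basis, i.e.\ that the normal monomials are a $\k$-basis rather than just a spanning set. In the paper this was done by exhibiting a bijection $\Phi$ between normal monomials and a known operadic basis $\mathsf{B}$ of $H_\bullet(\overline{\calM}_{0,n+1})$ coming from \cite{DKQ}. For a general wonderful model there is no operad and no such ready-made basis; instead I would match the normal monomials with the Yuzvinsky / Gaiffi monomial basis of $H^\bullet(\overline{Y}_{\calG})$, or more precisely with a nested-set basis indexed by $\calG$-nested sets together with admissible exponent data, using the known Poincar\'e polynomial of $\overline{Y}_{\calG}$ (De Concini--Procesi) to check that the counts agree. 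Concretely, I would set up an inductive decomposition of a normal monomial according to its divisor supported on the largest building-set element appearing, mirroring the decomposable / indecomposable case split and the maps $\Psi$, $f$ of the proof of Theorem \ref{th:Koszul}, and verify $\Phi\circ\Psi = \mathrm{id}$.

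The main obstacle is precisely this last step: the proof of Theorem \ref{th:Koszul} leans heavily on the operadic basis $\mathsf{B}$ of $\HyperCom$, which has no counterpart for an arbitrary building set, so one must either find the right combinatorial model for a basis of $H^\bullet(\overline{Y}_{\calG})$ that is manifestly square-free and compatible with the exotic ordering, or argue dimension-count-wise via the De Concini--Procesi formula for the Poincar\'e polynomial. A secondary difficulty is identifying the correct generalisation of ``initial interval'' and of the MI-complement condition; the poset-of-atoms structure that makes the $A_{n-1}$ case tractable must be replaced by a choice of linear order on atoms interacting with the matroid closure, and it is not obvious that \emph{every} building set admits such a choice making the argument go through --- which is part of why this is stated as a conjecture rather than a theorem.
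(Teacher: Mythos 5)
The statement you were asked to prove is labelled and stated as a \emph{conjecture} in the paper, and the paper offers no proof of it; the author leaves it open, expecting only that ``a slightly more technical version of the argument'' might eventually settle particular cases such as the weighted stable curve spaces. There is therefore no ``paper's own proof'' to compare your proposal against, and your text---honestly flagged as a programme rather than a demonstration---is not a proof either.

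The central gap is the one you name yourself, and it is real: Theorem~\ref{th:Koszul} closes the dimension count by exhibiting a bijection $\Phi$ between the proposed normal monomials and the operadic shuffle-tree basis $\mathsf{B}$ of $\HyperCom$, and for a general wonderful model $\overline{Y}_{\calG}$ there is no operad in sight and no ready-made combinatorial basis of $H^\bullet(\overline{Y}_{\calG})$ that is manifestly square-free and compatible with an exotic admissible ordering. A second gap you understate: the existence of an EHKR-type presentation (generators $X_G$ indexed by the building set, with the three families of quadratic relations you list) is itself not given in the generality of the conjecture. The standard presentation of $H^\bullet(\overline{Y}_{\calG},\mathbb{Z})$ is the Feichtner--Yuzvinsky/De Concini--Procesi one, which by the mechanism of Proposition~\ref{prop:KeelNotPBW} cannot admit a linear-and-quadratic Gr\"obner basis for any admissible ordering; one would first have to carry out and justify the change of variables $X_G := \sum_{H\in\calG,\, H\supseteq G} Y_H$ for an arbitrary pair $(\calL,\calG)$, determine exactly when the resulting relations are quadratic, and then identify the correct analogue of ``initial interval'' and of the essential MI-complement condition before any Gr\"obner analysis could even begin. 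Until those two points are resolved the claim remains open, which is precisely why the paper states it as a conjecture.
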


It would also be interesting to seek a further generalisation of our results to the case of algebras $D(\calL,\calG)$ defined by Feichtner and Yuzvinsky in \cite{FY} for a building set $\calG$ of an arbitrary atomistic lattice $\calL$. In general, those algebras admit geometric interpretation as Chow rings of certain smooth toric varieties, however those varieties are non-complete, so Theorem~\ref{th:ToricKoszul} is not applicable. We feel that to identify the class of lattices for which quadraticity of the algebra $D(\calL,\calG)$ implies its Koszulness, it may be useful to consider various notions of shellability of partially ordered sets~\cite{Bjo}, at least if one expects a quadratic Gr\"obner basis. We conclude with one more conjecture concerning a subclass of algebras $D(\calL,\calG)$ that are known to be quadratic: Chow rings of matroids~\cite{AHK}.

\begin{conjecture}
The Chow ring of any matroid is Koszul. 
\end{conjecture}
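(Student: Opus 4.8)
The plan is to replay the proof of Theorem~\ref{th:Koszul}: for a matroid $M$ of rank $r$ on a linearly ordered ground set $E$, exhibit a \emph{commutative} quadratic Gr\"obner basis for a well-chosen presentation of the Chow ring $\underline{A}^\bullet(M)$. Recall that $\underline{A}^\bullet(M)$ is a graded Artinian Gorenstein $\k$-algebra with the Adiprasito--Huh--Katz presentation~\cite{AHK} by generators $x_F$ (one per nonempty proper flat $F$), linear relations $\sum_{F\ni i}x_F=\sum_{F\ni j}x_F$, and quadratic monomial relations $x_Fx_G=0$ for incomparable $F,G$; in particular this algebra is always quadratic, so Conjecture~\ref{conj:DCP} specialises here to the bare assertion of Koszulness. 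Exactly as in Proposition~\ref{prop:KeelNotPBW}, the presence of a genuine system of linear relations obstructs a linear-and-quadratic Gr\"obner basis (a reduced one would force $x_F^2=0$ for the smallest non-eliminated variable, contradicting known non-vanishing of self-intersections of the tautological divisors), so the first task is to pass to a \emph{minimal} presentation: perform the change of variables $X_F:=\sum_{G\supseteq F}x_G$ --- the direct analogue of the substitution $X_S:=\sum_{S\subseteq T}Y_T$ taking the De Concini--Procesi presentation to the Etingof--Henriques--Kamnitzer--Rains--Singh one --- so that the linear relations become redundant generators and can be eliminated, leaving a presentation on a minimal set of degree-one generators $X_F$ with one quadratic relation per pair of proper flats, encoding the multiplicative structure of the lattice of flats $L(M)$. (For realisable $M$ this is a wonderful-model presentation and is covered by Conjecture~\ref{conj:DCP}; the genuinely new content is the non-realisable case, which must be handled purely combinatorially.)

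\textbf{An exotic ordering.} The second task is an admissible ordering of monomials in the $X_F$ generalising the one built from Lemma~\ref{lm:ExoticOrder}. I would replace the total order $\triangleleft$ on $2^{\underline n}$ by a total order on the proper flats of $M$ refining the transitive closure $\prec$ of the following moves on $L(M)$: pass from a flat $F$ to $\operatorname{cl}_M(F\setminus\{\max F\})$ (a lattice analogue of $\partial$), or from $F$ to a flat covering $F$ obtained by adjoining only elements smaller than $\max F$. Proving that $\prec$ is a partial order should replay Lemma~\ref{lm:ExoticOrder} verbatim: along each move the largest element of a flat does not increase, so a hypothetical cycle keeps that maximum fixed and hence strictly increases rank at every step --- impossible. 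From $\prec$ one then builds the graded-lexicographic ordering of the $X_F$ and declares this to be the ordering for which a quadratic Gr\"obner basis is sought.

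\textbf{Gaussian elimination and the crux.} With the ordering fixed, I would run the Gaussian elimination of Step~II of the proof of Theorem~\ref{th:Koszul} on the quadratic relations, sorting them into types according to how the two flats $F,G$ sit in $L(M)$ (incomparable with ``disjoint'' supports, comparable, or linked by a ``minimal interval complement''-type condition transcribed to flats), producing a generating set with pairwise distinct leading monomials and then the list of normal quadratic monomials (the analogue of Lemma~\ref{lm:NormalWeight2}). The decisive step is the count: one must show the number of monomials all of whose quadratic divisors are permitted equals $\dim_{\k}\underline{A}^\bullet(M)$, so that Proposition~\ref{prop:GBCrit} (hence Proposition~\ref{prop:GBKoszul}) applies. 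In the $\overline{\calM}_{0,n+1}$ case this equality was delivered by the shuffle-tree basis $\mathsf{B}$ of $\HyperCom$ and the bijection $\Phi$; for a general matroid there is no operad to appeal to. I would instead take for $\mathsf{B}$ a purely combinatorial model of a \emph{known} $\k$-basis of $\underline{A}^\bullet(M)$ --- the Feichtner--Yuzvinsky nested-set monomial basis~\cite{FY} (equivalently, the basis arising from the semismall/Bergman-fan decomposition of Braden--Huh--Matherne--Proudfoot--Wang) --- and build an explicit bijection onto the normal monomials, verified by induction on $r$ that peels off the flats containing $\max E$, mirroring the decomposable/indecomposable case analysis at the end of the proof of Theorem~\ref{th:Koszul}.

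\textbf{Main obstacle.} The hard part will be exactly this bijection. The lattice of flats of an arbitrary matroid has none of the rigidity of the partition lattice $\Pi_{\underline n}$: the recursive grafting structure of shuffle trees is unavailable, the ``minimal interval complement'' combinatorics and the recursive shape of the normal monomials become far more delicate, and matching them with nested sets is a genuinely new combinatorial problem. A sensible way to calibrate the argument --- and to discover the right invariants --- is to first carry it out for graphic and for uniform matroids, where $L(M)$ is explicit and the $h$-vector is known, and to cross-check the count there. An alternative worth developing in parallel is to bypass Gr\"obner bases and instead build a \emph{Koszul filtration} of $\underline{A}^\bullet(M)$ indexed by the initial segments of flats in the chosen order: this trades global monomial bookkeeping for a family of short exact sequences relating the Chow ring to those of deletions and contractions, but the inductive check that the filtration is Koszul runs into the same lattice-theoretic difficulty.
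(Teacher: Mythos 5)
This statement is a \emph{conjecture} in the paper, not a theorem: the author offers no proof, so there is nothing to compare your argument against. What you have written is, by your own account, a research programme rather than a proof --- you flag the decisive step (``the hard part will be exactly this bijection'') as open. That is the honest assessment, and it is correct.

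Let me add some concrete caveats about why the programme, as stated, is not yet a proof and may need to change shape. First, the substitution $X_F:=\sum_{G\supseteq F}x_G$ does not play the same role for a general matroid as $X_S:=\sum_{S\subseteq T}Y_T$ does for $\overline{\calM}_{0,n+1}$. In the latter case the linear relations become $X_S=0$ for $|S|=2$, so the small generators vanish outright; for a simple matroid $M$ the Adiprasito--Huh--Katz linear relations become $X_{\{i\}}=X_{\{j\}}$ for all atoms $i,j$, i.e.\ they \emph{identify} the rank-one generators rather than kill them, leaving one extra degree-one generator beyond the flats of rank $\ge 2$. This is also tied to the fact that $\overline{\calM}_{0,n+1}$ is the wonderful model for the \emph{minimal} building set of $\Pi_n$, whereas the matroid Chow ring of \cite{AHK} is the Feichtner--Yuzvinsky algebra for the \emph{maximal} building set of $L(M)$; the two constructions have different normal-monomial combinatorics (for $\Pi_n$ the maximal building set yields the Losev--Manin space, not $\overline{\calM}_{0,n+1}$), so the analogy you are drawing is looser than it appears. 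Second, your lattice analogue of $\partial$, namely $F\mapsto\operatorname{cl}_M(F\setminus\{\max F\})$, can be the identity when $\max F$ lies in the closure of the rest; one must check this degeneracy does not break the well-foundedness argument in your version of Lemma~\ref{lm:ExoticOrder} (it does not, since an identity move contributes nothing to a cycle, but the point needs to be said). Third, and most seriously, the entire strategy presupposes that the defining ideal of $\underline{A}^\bullet(M)$ admits \emph{some} commutative quadratic Gr\"obner basis. For $\overline{\calM}_{0,n+1}$ this was a theorem, proved via the basis $\mathsf{B}$ of $\HyperCom$; for a general matroid there is no operadic scaffolding and no a priori reason to expect such a basis. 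If it fails for even one matroid, the whole approach collapses and Koszulness must be established by other means. Your closing remark about a Koszul filtration indexed by an initial-segment order on flats --- trading one global Gr\"obner-basis count for a family of short exact sequences coupled to deletion/contraction --- is therefore not a fallback but arguably the more robust line of attack, and deserves to be pursued as the primary strategy rather than a hedge.

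In summary: the proposal is a plausible and well-informed sketch that correctly mirrors the \emph{shape} of the proof of Theorem~\ref{th:Koszul} and correctly locates where the difficulty lies, but it is not a proof, it contains no replacement for the operadic input that drove the $\overline{\calM}_{0,n+1}$ argument, and it rests on an unverified existence assumption (a quadratic Gr\"obner basis) that may simply be false in general.
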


This conjecture would also automatically imply Koszulness of cohomology for the components of the extended modular operad. Indeed, in \cite{CHMR} those components are related to Bergman fans of graphic matroids; their cohomology is isomorphic to the appropriate matroid Chow rings. 

\providecommand{\bysame}{\leavevmode\hbox to3em{\hrulefill}\thinspace}


\begin{thebibliography}{10}
\bibitem{AHK} Adiprasito, K., Huh, J., Katz, E.: Hodge theory for combinatorial geometries. Ann. of Math. (2) 188:2, 381--452 (2018). 
\bibitem{Bab80} Babenko, I. K.: Analytical properties of Poincar\'e series of a loop space. Math. Notes, 27:5, 359--367 (1980).
\bibitem{Ber} Berglund, A.: Koszul spaces. Trans. Amer. Math. Soc. 366:9, 4551--4569 (2014).
\bibitem{BBhighly} Berglund, A., B\"orjeson, K.: Free loop space homology of highly connected manifolds. Forum Math. 29:1, 201--228 (2017).
\bibitem{Bjo} Bj\"orner, A.: Shellable and Cohen-Macaulay partially ordered sets. Trans. Amer. Math. Soc. 260:1, 159--183 (1980).
\bibitem{BD} Bremner, M., Dotsenko, V.: Algebraic operads. An algorithmic companion. CRC Press, Boca Raton, FL, 2016. xvii+365 pp.
\bibitem{BF} Burghelea, D., Fiedorowicz, Z.: Cyclic homology and algebraic $K$-theory of spaces II. Topology 25:3, 303--317 (1986).
\bibitem{CHMR} Cavalieri, R., Hampe, S., Markwig, H., Ranganathan, D.: Moduli spaces of rational weighted stable curves and tropical geometry. Forum Math. Sigma 4, e9 (2016). 35 pp.
\bibitem{CH} Cirici, J., Horel, G.: \'Etale cohomology, purity and formality with torsion coefficients. ArXiv preprint \texttt{arXiv:1806.03006}.
\bibitem{Dan} Danilov, V.~I.: The geometry of toric varieties. Russian Math. Surveys 33:2, 97--154 (1978).
\bibitem{DCP} De Concini, C., Procesi, C.: Wonderful models of subspace arrangements. Selecta Math. (N.S.), 1:3, 459--494 (1995).
\bibitem{DGMS} Deligne, P., Griffiths, P., Morgan, J., Sullivan, D., Real homotopy theory of K\"ahler manifolds. Invent. Math. 29:3, 245--274 (1975).
\bibitem{Dot} Dotsenko, V.: Word operads and admissible orderings. Appl. Categor. Struct (2020). \url{https://doi.org/10.1007/s10485-020-09591-0}.
\bibitem{DKQ} Dotsenko, V., Khoroshkin, A.: Quillen homology for operads via Gr\"obner bases.  Doc. Math. 18, 707--747 (2013).
\bibitem{DSV} Dotsenko, V., Shadrin, S., Vallette, B.: Toric varieties of Loday's associahedra and noncommutative cohomological field theories. J. Topology 12, 463--535 (2019). 
\bibitem{EHKR} Etingof, P., Henriques, A.:, Kamnitzer, J., Rains, E.~M.: The cohomology ring of the real locus of the moduli space of stable curves of genus $0$ with marked points. Ann. of Math. (2) 171:2, 731--777 (2010).
\bibitem{FR} Falk, M., Randell, R.: The lower central series of a fiber-type arrangement. Invent. Math. 82:1, 77--88 (1985).
\bibitem{FY} Feichtner, E.~M., Yuzvinsky, S.: Chow rings of toric varieties defined by atomic lattices. Invent. Math. 155:3, 515--536 (2004).
\bibitem{Ful} Fulton, W.: Introduction to toric varieties. Annals of Mathematics Studies, 131. The William H. Roever Lectures in Geometry. Princeton University Press, Princeton, NJ, 1993. xii+157 pp.
\bibitem{Gai} Gaiffi, G.: Blowups and cohomology bases for De Concini--Procesi models of subspace arrangements. Selecta Math. (N.S.) 3:3, 315--333 (1997). 
\bibitem{GR} Gallardo, P., Routis, E.: Wonderful compactifications of the moduli space of points in affine and projective space.  
Eur. J. Math. 3:3, 520--564 (2017).
\bibitem{Get} Getzler, E.: Operads and moduli spaces of genus $0$ Riemann surfaces. In: ``The moduli space of curves (Texel Island, 1994)'', 199--230, Progr. Math., 129, Birkh\"auser Boston, Boston, MA, 1995.
\bibitem{GW} Goodwillie, T.: Cyclic homology, derivations, and the free loop space. Topology 24:2, 187--215 (1985). 
\bibitem{HV} Halperin, S., Vigu\'e-Poirrier, M.: The homology of a free loop space. Pacific J. Math. 147:2, 311--324 (1991). 
\bibitem{Has} Hassett, B.: Moduli spaces of weighted pointed stable curves. Adv. Math. 173:2, 316--352 (2003).
\bibitem{Iyu} Iyudu, N.: On Koszulity in homology of moduli spaces of stable $n$-pointed curves of genus zero. ArXiv preprint \texttt{arXiv:1304.6343}. 
\bibitem{JW} J\"ollenbeck, M., Welker V.: Minimal resolutions via algebraic discrete Morse theory. Mem. Amer. Math. Soc. 923. American Mathematical Society, Providence, RI, 2009. vi+74 pp.
\bibitem{Keel} Keel, S.: Intersection theory of moduli space of stable $n$-pointed curves of genus zero. Trans. Amer. Math. Soc. 330:2, 545--574 (1992). 
\bibitem{KW} Khoroshkin, A., Willwacher, T.: Real moduli space of stable rational curves revisted. ArXiv preprint \texttt{arXiv:1905.04499}.
\bibitem{Kn} Knudsen, F.: The projectivity of the moduli space of stable curves. II. The stacks $M_{g,n}$. Math. Scand. 52:2, 161--199 (1983). 
\bibitem{Ko} Kohno, T.: S\'erie de Poincar\'e--Koszul associ\'ee aux groupes de tresses pures. Invent. Math. 82:1, 57--75 (1985).
\bibitem{KM} Kontsevich, M., Manin, Y.: Quantum cohomology of a product. With an appendix by R. Kaufmann. Invent. Math. 124:1-3, 313--339 (1996).
\bibitem{Lam} Lambrechts, P.: On the Betti numbers of the free loop space of a coformal space. J. Pure Appl. Algebra 161:1-2, 177--192 (2001). 
\bibitem{LV} Loday, J.-L. and Vallette, B.: Algebraic operads. Grundlehren Math. Wiss. 346, Springer, Heidelberg, 2012. 
\bibitem{LMExt} Losev, A., Manin, Y.: Extended modular operad. In: ``Frobenius manifolds'', 181--211, 
Aspects Math., E36, Friedr. Vieweg, Wiesbaden, 2004.
\bibitem{LM} Losev, A., Manin, Y.: New moduli spaces of pointed curves and pencils of flat connections. Michigan Math. J. 48, 443--472 (2000).
\bibitem{Man18} Manin, Y.: Higher structures, quantum groups, and genus zero modular operad. ArXiv preprint \texttt{arXiv:1802.04072}. 
\bibitem{Man} Manin, Y.: Moduli stacks $\overline{L}_{g,S}$.  Mosc. Math. J. 4:1, 181--198 (2004).
\bibitem{MV19} Manin, Y., Vallette, B.: Monoidal structures on the categories of quadratic data. ArXiv preprint \texttt{arXiv:1902.03778}. 
\bibitem{McC} McCleary, J.: On the mod $p$ Betti numbers of loop spaces. Invent. Math. 87:3, 643--654 (1987).
\bibitem{MM} Milnor, J.~W., Moore, J.~C.: On the structure of Hopf algebras. Ann. of Math. (2) 81:2, 211--264 (1965).
\bibitem{Pet} Petersen, D.: Koszulness of the cohomology ring of moduli of stable genus zero curves, \texttt{MathOverflow} question \url{https://mathoverflow.net/q/99613}, 2012-06-15. 
\bibitem{PP} Polishchuk, A., Positselski, L.: Quadratic algebras. University Lecture Series, 37. American Mathematical Society, Providence, RI, 2005. xii+159 pp.
\bibitem{Pro} Procesi, C.: The toric variety associated to Weyl chambers. In: ``Mots'', 153--161, Lang. Raison. Calc., Herm\'es, Paris, 1990.

\bibitem{Readdy} Readdy, M.~A.: The pre-WDVV ring of physics and its topology. Ramanujan J. 10:2, 269--281 (2005). 

\bibitem{Singh} Singh, D.: The moduli space of stable $N$-pointed curves of genus zero. Ph. D. Thesis, University of Sheffield (2004).
\bibitem{Ufn} Ufnarovskij, V.~A.: Combinatorial and asymptotic methods in algebra. In: ``Algebra, VI'', 1--196, Encyclopaedia Math. Sci., 57, Springer, Berlin, 1995.
\bibitem{Wi} Willwacher, T.: M. Kontsevich's graph complex and the Grothendieck-Teichmüller Lie algebra. Invent. Math. 200:3, 671--760 (2015).
\bibitem{Yuz} Yuzvinsky, S.: Cohomology bases for the De Concini--Procesi models of hyperplane arrangements and sums over trees. Invent. Math. 127:2, 319--335 (1997). 

\end{thebibliography}
\end{document}